\theoremstyle{plain}
\newtheorem*{theo}{Theorem}
\newtheorem*{def-theo}{Definition-Theorem}
\newtheorem*{fact}{Fact}
\theoremstyle{definition}
\newtheorem*{example}{Example}
\newtheorem*{definition}{Definition}
\theoremstyle{remark}
\newtheorem{thm}{Theorem}[section]
\newtheorem{cor}[thm]{Corollary}
\theoremstyle{definition}
\newtheorem{rem}[thm]{Remark}
\newtheorem{ass}[thm]{Assertion}
\newtheorem{prob}[thm]{Problem}
\newtheorem{conj}[thm]{Conjecture}
\numberwithin{equation}{section}
\newcommand{\bp}{\begin{pmatrix}}
\newcommand{\ep}{\end{pmatrix}}
\newcommand{\bps}{\begin{smallmatrix}}
\newcommand{\eps}{\end{smallmatrix}}
\def\C{{\mathbb C}}
\def\R{{\mathbb R}}
\def\Z{{\mathbb Z}}
\def \0{{\bf 0}}
\def \1{{\bf 1}}
\def \mf#1#2#3#4{
\xymatrix{{#1}\  \ar@<0.4ex>[r]^{{#2}} & \ {#4}
\ar@<0.4ex>[l]^{{#3}}
}
}
\def \mfs#1#2#3#4{\!
\xymatrix@C=1.5em{{#1} \! \ar@<0.2ex>[r]^{{#2}} & \! {#4}
\ar@<0.2ex>[l]^{{#3}}
}
\!}
\def \mfl#1#2#3#4{
\xymatrix@C=2.6em{{#1}\  \ar@<0.4ex>[r]^{{#2}} &\  {#4}
\ar@<0.2ex>[l]^{{#3}}
}
}
\def \mfss#1#2#3#4{\!
\xymatrix@C=1.5em{{#1} \ar@<0.3ex>[r]^{{#2}} & {#4}
\ar@<0.3ex>[l]^{{#3}}
}
\!}
\begin{document}

\title{{\Large Inversion formula 
for 
the growth function\\
 of a cancellative  monoid
}}

\author{Kyoji Saito}
\date{}
\maketitle

 {\renewcommand{\baselinestretch}{0.1}

\begin{abstract}
\noindent
Let $(M,\deg)$ be a cancellative monoid $M$ equipped with a discrete degree map $\deg\!:\!M\! \to\!\R_{\ge0}$, and let $P_{M, \deg}(t)\!:=\!\!\sum_{u\!\in\! M/\!\sim}\!t^{\deg(u)}$ be its generating series
 (here $u\!\sim\! v \underset{def}{\Leftrightarrow} u|_lv \ \&\ v|_lu$, {\footnotesize \S2 Def.}). We prove the {\it inversion formula}
\vspace{-0.05cm}
 \[
P_{M,\deg}(t)\cdot N_{M,\deg}(t)=1
\] 
where the second factor in LHS is a suitably signed generating series
\vspace{-0.05cm}
\[
\begin{array}{l}
N_{M,\deg}(t):= 1+\sum_{T\in \mathrm{Tmcm}(M,I_0)}(-1)^{\#J_1+\cdots+\#J_{n}-n+1} \sum_{\Delta\in |T|} t^{\deg(\Delta)}
 \vspace{-0.1cm}
\end{array}
\]
of the union of some minimal common multiple sets $|T|$, where $T$s, called  towers of minimal comon multiple sets in $M/\!\!\sim$,  form a tree $\mathrm{Tmcm}(M,I_0)$. 

If the  monoid  is $(M,\deg)\!=\!(\Z_{>0},\log)$,  we get  Riemann's zeta function $P_{\Z_{\!>\!0},\log}(\exp(-s))\!=\!\zeta(s)$ as the growth function. Then the inversion formula turns out to be the Euler product formula $\zeta(s)\prod_{p\in I_0}(1\!-\!p^{-s})\!=\!1$. 
\end{abstract}

\tableofcontents
\footnote
{{\bf \small Acknowledgement:} The author is deeply grateful to Tadashi Ishibe for various discussions on cancellativity of monoids as well as  several explicit calculations of examples of the skew growth functions of monoids, which inspired and supported the present work.
He is also grateful to Scott Carnahan for the careful reading of the manuscript.}

 \section{Introduction} 
Let $M$ be a monoid, i.e.\ a semigroup with the unit 1, and let  
$\deg: M\!\to\! \R_{\ge0}$ 
be a discretely valued degree map on $M$ (see \S4). Then the (spherical) 
growth function of $M$ with respect to $deg$ is defined as the generating series:
\vspace{-0.05cm}
\[
\vspace{-0.05cm}
P_{M,\deg}(t):=\sum_{u\in M/\!\sim}t^{\deg(u)},
\vspace{-0.2cm}
\]
where $u\sim v$ for $u,v\in M$ means $u\mid_l v$ and $v\mid_l u$ (see \S2 Def.).
Even though the definition of the growth function of a monoid looks a simple generalization of that for a group,  not much work seems to be available except for some special case studies and some general works on language ([A-N][B][C][C-F][D][G][G-P][I1][S1234][S-I]), and we know little about its general nature. The purpose of the present paper is to give a new approach to the growth function of a monoid by giving a presentation of its inversion function  $\frac{1}{P_{M,I}(t)}$ by a certain {\it "skew generating function"\footnote
{By a {\it skew generating function}, we mean a suitably term-wise signed generating series.
}  of some common multiple sets in the monoid $M$}.

Let us explain the idea of the inversion function  in the most naive case studied in [A-N] and [S23].  Let  $M$ be a 
monoid generated by a finite set $I$ with positive homogeneous relations. It admits naturally an integral degree map by giving weight 1 to each generator in $I$. Suppose, further,  that $M$ is  cancellative  and that any subset $J$ of  $I$  admits either the least right common multiple $\Delta_J$ or no common multiple in $M$ (typically the case for Artin monoids [B-S][D]). Then the inversion function  $P_{M,I}(t)^{-1}$  
is given by 
\vspace{-0.1cm}
\[\vspace{-0.1cm}
N_{M,I}(t):={\sum}_{J\subset I}(-1)^{\#J}t^{\deg(\Delta_J)}
\vspace{-0.05cm}
\]
where the summation index $J$ runs over all subsets of $I$ whose least right  common multiple exists. \footnote
{In this case, $N_{M,I}(t)$ is a polynomial. Actually, it is shown that the coefficients of $N_{M,I}(t)$ give the recursion relation on the sequence of coefficients of $P_{M,\deg}(t)$ (which is generalized in \S5 of the present paper). Zero loci of the polynomial $N_{M,I}(t)$ plays a quite important role in the study of limit functions in [S1234][K-T-S], which motivated the present work.
}

That is, {\it the inversion function 
of the growth function for this class of monoids  
is given by  the skew generating function $N_{M,I}(t)$ of the set 
$\{\Delta_J\}_{J\subset I}$ of all least common right multiples for subsets $J$ of $I$}. However, in general, a monoid may not admit the least right common multiple $\Delta_J$ for a given finite subset $J$ of $M$.  More precisely, even if there exist some common right-multiples  of  $J$, there may not exist the unique, up to units, least (with respect to the partial order induced from left-division relation) element among them.
Thus, the lack of the least common multiples in $M$ may look to be an obstruction to generalize the above  inversion formula to wider class of monoids.

The purpose of the present paper is to resolve this problem as follows.  By assuming the descending chain condition on $M/\!\!\sim$ with respect to the partial order induced by left-divisibility relation, for any given finite set $J\!\subset \! M/\!\!\sim$, 
instead of considering the least common multiple $\Delta_J$, 
we are able to consider the set $\mathrm{mcm}(J)$ of {\it minimal common right-multiples} of $J$.
However, still the datum $\{\mathrm{mcm}(J)\}_{J\subset I}$ is not sufficient to recover the inversion formula, since in general (as we shall see in examples), a subset $J'$ of $\mathrm{mcm}(J)$ may have common right-multiples. So we need to consider the set $\mathrm{mcm}(J')$ 
for a subset $J'$ of $\mathrm{mcm}(J)$. Then again, we may need to consider $\mathrm{mcm}(J'')$ for a subset $J''\!\subset\!\mathrm{mcm}(J')$, and so on. Repeating this process, we are necessarily lead to consider a {\it tower} $T$: a finite sequence  $I\!\supset\! J_1, J_2,\cdots\!,J_n$ of subsets of $M/\!\!\sim$ such that $J_2\!\subset\!\! \mathrm{mcm}(J_1), \!\cdots\!, J_n\!\subset\!\!\mathrm{mcm}(J_{n-1})$}.\!\footnote
{For a technical reason, we assume further $\#J_k>1$ and $\mathrm{mcm}(J_k)\not=\emptyset$ for $1\le k\le n$,
}\!
It is convenient to put an {\it oriented graph\! structure} on the set $\mathrm{Tmcm}(M)$ of all towers, by putting arrows from a tower to its immediate successors.  The graph decomposes into a disjoint union of rooted 
trees $\mathrm{Tmcm}(M)\!=\!\sqcup_{I\subset M}\mathrm{Tmcm}(M,I)$ where the label $I$, as the ground of towers, runs over all subsets of $M$ satisfying the minimality condition $I\!=\!\min(I)$. 

 If $M$ has a discrete degree map $\deg\!:M\!\to\! \R_{\ge0}$,\footnote{
 A degree map is a monoid homomorphism $deg: M\!\to\!\R_{\ge0}$. 
 Its range may not necessarily be contained in an arithmetic progression, but we assume only its discreteness (see \S4 Definition). Therefore, the (skew-)growth functions are not  necessarily power series in the usual sense, but they may better be regarded as Dirichlet series (see Remark 4.3 and \S6 Example).
 \vspace{-0.6cm}
 }
 we can define not only the growth function $P_{M,\deg}(t)$ as before, but also the {\it skew generating function} 
\[
N_{M,I}(t):=1+  \sum_{T\in \mathrm{Tmcm}(M,I)}\!(-1)^{\#J_1+\cdots+\#J_{n}-n+1} \ \sum_{\Delta\in |T|}\ t^{\deg(\Delta)}
\]
with respect to $\deg$ for the tree $\mathrm{Tmcm}(M,I)$\!  of all towers grounded over  the label set\! $I$.\ Here,\! the summation index $T$ runs over the vertex of  the tree $\mathrm{Tmcm}(M,I)$, $J_1,\cdots,J_{n}$ are the stages of the tower $T$ and 
$|T|:=\mathrm{mcm}(J_n)$ is the set of the minimal common multiples on the top stage of the tower $T$ (see \S4).  

In particular, for the label set $I_0\!=\!\min(M/\!\!\!\sim\setminus\{[1]\})$ (the  set of minimal elements of $M$ with respect to the partial ordering induced by the left  division relation), we  set $N_{M,\deg}(t):=N_{M,I_0}(t)$. Then, as the main result of the present paper, we obtain the inversion formula (\S5 Theorem):
\vspace{0.1cm}
\[
P_{M,\deg}(t)\cdot N_{M,\deg}(t)=1.
\vspace{0.1cm}
\]

 We stress  that {\it the first factor $P_{M,\deg}(t)$ in the formula describes the growth nature of the monoid $M$ and the second factor $N_{M,\deg}(t)$ describes the multiplicative nature of the monoid $M$, which are combined as in the formula. }
In order to illustrate this nature of the formula, let us consider the monoid $M\!=\!\Z_{>0}$ with the ordinary product  structure and take "$\log$" to be the degree map (\S5 Example 2). Then, by a change of variable $t=\exp(-s)$,  we have $P_{\Z_{>0},\log}(\exp(-s))=\zeta(s)$ (Riemann's zeta-function) and $N_{M,\deg}(\exp(-s))=\prod_{\substack{p: \text{primes}}}(1-p^{-s})$  so that the inversion formula turns out to be the  well known Euler product formula:
\vspace{-0.1cm}
\[
\zeta(s)\ \cdot\! \prod_{\substack{p: \text{prime}\\\text{numbers}}}(1-p^{-s})\ =\ 1.
\vspace{-0.2cm}
\]

The construction of the paper is as follows. 
In \S2, we fix basic concepts and notation on division theory on a monoid assuming a descending chain condition, where we introduce two operations "cm" (common multiple), "min" (minimal) and their composition "mcm=min$\cdot$cm" (minimal common multiple) on subsets of a monoid. The concept of a tower of minimal common multiples, and the graph structure on the set of all towers are introduced in \S3.  Introducing a concept of a discrete degree map on a monoid, we introduce, in \S4, the generating  and the skew generating functions as formal Dirichlet series.  The inversion formula is formulated and proven in \S5. In \S6,  by a use of 
presentations of monoids, we give examples of monoids whose 
growth and skew-growth functions are not power series but are formal Dirichlet series (not necessarily convergent).


\section{Minimal common multiples}
We recall some basic concepts on monoids and fix notations.

\begin{definition}  1. A semigroup $M$ with the unit element 1 is called a {\it monoid}.

\smallskip
 2. A monoid $M$ is called {\it cancellative}  if a relation $aub=avb$ for elements $a,b,u,v\in M$ implies a relation $u=v$.

\smallskip
3. For two elements $u,v$ of $M$, we denote
\vspace{-0.3cm}
\[
u \mid_l v
\vspace{-0.2cm}
\]
if there exists an element $x\in M$ such that $v=ux$, and say that $u$ divides $v$ from the left, or, $v$ is a multiple of $u$ from the right.  

4. A monoid $M$ is called (left) {\it conical} if relations $u\mid_l v$ and $v\mid_l u$ for elements $u,v\in M$ imply $u=v$.
\end{definition}

For a  monoid $M$ which may not be conical, one can define an equivalence relation on $M$ by putting $u\sim v$ $\Leftrightarrow_{\text def.}$  $u\mid_l v$ and $v\mid_l u$.  The equivalence class of $u\!\in\!M$ may be denoted by $[u]$, but we shall often confuse   $u\!\in\! M$ with its class in $M/\!\!\sim$. What is important is that by this equivalence relation, the left divisibility relation are preserved (i.e. if $u\sim u'$, $v\sim v'$ and $u\mid_l v$ then $u' \mid_l v'$).  Thus, we obtain a partial order set structure "$\le$" (or $\ge$) on the quotient set $M/\!\sim$ induced by the left division relation $\mid_l$, i.e. $u\mid_l v \! \Leftrightarrow\! u\le v \! \Leftrightarrow\! v\ge u$.  We shall denote by $u\!<\!v$ (or $v\!>\!u$) if $u\mid_l v$ and $u\not\sim v$.  
 
 In case when $M$ is cancellative, the equivalence relation $\sim$ has much simpler interpretation as follows.
 \begin{ass}
 {\it 
 Let $M$ be a cancellative monoid. Then, the set of right invertible elements of $M$ coincides with the set of left invertible elements of $M$ and they form the largest subgroup, denote by $G$, of $M$. Then, $u\sim v$ for $u,v\in M$ if and only if $uG=vG$, that is:  
 }
\vspace{-0.1cm}
 \[
 M/\!\sim\ \ =\ M/G. 
\]
 \end{ass}
 \begin{proof}
 This is immediately verified from the definition. 
  \end{proof}
  
In general, the product structure on $M$ may not be preserved on the quotient $M/\!\!\sim$ (i.e.\  $u
\!\sim\! u'$ and $v\!\sim\! v'$ does not imply $uv\! \sim\! u'v'$ in general). If the product structure is preserved, then the quotient $M/\!\!\sim$ is automatically a conical monoid.

\begin{rem} 
In the present paper, the enumerations for the growth functions is done in the level of the quotient set $M/\!\!\sim$ but not of $M$, since we use only the division relations (i.e. the poset structure) on the quotient set  $M/\!\!\sim$ to calculate it. 
Thus the inversion function of the growth function can be formally calculated by  Moebius inversion formula for the poset (see [C-F]). 

However, \vspace{-0.1cm}
the poset  $M/\!\!\sim$ is not arbitrary but  admits natural left $M$-action
\[
M\ \times\  (M/\!\!\sim) \ \ \longrightarrow \  \  M/\!\!\sim .
\vspace{-0.1cm}
\]
This fact, later on, induces a quite important consequence that the inversion function can be enhanced  using this action (see "motivic" formulation of the inversion formula in \S5 {\bf Remark}), giving   more structures on the inversion function {\it due to the influence of the monoid structure on $M$}, typically Euler product formula for abelian Gaussian monoids (\S5 Ex.2).
\end{rem}

In the rest of the paper, we assume the following chain condition on $M/\!\!\sim$.

\bigskip
\noindent
{\bf Descending chain condition.} 
There does not exist an infinite strictly decreasing sequence 
$u_1>u_2>u_3> \cdots$ of elements  in $M/\!\!\sim$.

\bigskip

We consider two operations on the set of subsets of $M/\!\!\sim$: {\it common multiple set} and {\it minimal set}. For  a subset $J$  of $M/\!\!\sim$ (which may not necessarily be finite),  put 
\[
\begin{array}{cccll}
\mathrm{cm}(J)\!&:=\{\! &\! \! u\in M/\!\sim \!& \! \mid \quad j\mid_l u \quad \forall j\in J\ \} \\
\min(J)\!&:=\{\! &\!\!  u\in J & \! \mid \ \not\exists v\in J \  \text{ such that} \  v<u \ \},
\end{array}
\]
and their composition:  the {\it set of minimal common multiples } of the set $J$ by
\[
\mathrm{mcm}(J):=\min(\mathrm{cm}(J)).
\]
Actually, $\mathrm{cm}(J)$  may be the empty set. However, due to the {\bf Descending chain condition},  if $J\!\not=\!\emptyset$ then $\min(J)\!\not=\!\emptyset$. More precisely, we have the following fact.

\medskip
\begin{fact}
For any $u\in J$ there exists an element $v\in \min(J)$ such that $v\mid_l u$ holds.
\end{fact}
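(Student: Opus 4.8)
The statement to be proven is: for any nonempty subset $J$ of $M/\!\!\sim$ and any $u \in J$, there exists $v \in \min(J)$ with $v \mid_l u$. I would prove this by a standard "no infinite descent" argument, using the Descending chain condition on $M/\!\!\sim$ as the only essential tool.

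The plan is to argue by contradiction. Suppose there is some $u_1 := u \in J$ admitting no $v \in \min(J)$ with $v \mid_l u_1$. In particular $u_1 \notin \min(J)$ itself (since $u_1 \mid_l u_1$), so by the definition of $\min(J)$ there exists $u_2 \in J$ with $u_2 < u_1$. Now I claim $u_2$ also has the bad property: any $v \in \min(J)$ with $v \mid_l u_2$ would satisfy $v \mid_l u_1$ by transitivity of $\mid_l$ (equivalently, of the partial order $\le$ on $M/\!\!\sim$), contradicting the choice of $u_1$. Hence $u_2 \notin \min(J)$, so there is $u_3 \in J$ with $u_3 < u_2$, and $u_3$ again inherits the bad property for the same reason. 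Iterating, I obtain an infinite strictly decreasing sequence $u_1 > u_2 > u_3 > \cdots$ in $M/\!\!\sim$, contradicting the Descending chain condition.

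The key steps, in order, are: (i) set up the contradiction hypothesis that $u$ has no minimal divisor lying in $J$; (ii) observe that this property propagates downward along the strict order $<$, using transitivity of $\mid_l$ on the quotient $M/\!\!\sim$ (which was established in \S2 when defining the partial order $\le$); (iii) conclude that an element with the bad property is never in $\min(J)$, hence always has a strictly smaller companion in $J$ with the same bad property; (iv) assemble these into an infinite descending chain and invoke the chain condition. One should also remark at the outset that $\mid_l$ is reflexive and transitive on $M$ and descends to a genuine partial order on $M/\!\!\sim$, so that "$u_{k+1} < u_k$" combined with "$v \mid_l u_{k+1}$" legitimately yields "$v \mid_l u_k$" and "$v \not\sim u_k$ whenever $v \not\sim u_{k+1}$"—though for this Fact only the divisibility part is needed.

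I do not expect any serious obstacle here: the argument is the routine well-foundedness deduction, and the only subtlety is making sure one is genuinely working in the poset $M/\!\!\sim$ (where $<$ is antisymmetric, so a strictly decreasing chain really does consist of distinct elements) rather than in $M$ itself. The mild care needed is to phrase the "bad property" so that it is manifestly hereditary under passing to a strictly smaller element of $J$; once that is done, the infinite descent is immediate and the Descending chain condition closes the argument.
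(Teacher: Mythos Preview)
Your argument is correct and is precisely the standard well-foundedness deduction one expects here. The paper in fact states this as a \emph{Fact} without supplying any proof, treating it as an immediate consequence of the Descending chain condition; your infinite-descent argument is exactly the routine verification the paper leaves implicit.
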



\section{Tower of  minimal common multiples}

Let $M$ be,  as in \S2, a monoid satisfying {\bf Descending chain condition}.  In this section, we introduce towers of  minimal common multiples
in $M/\!\!\sim$. 

\begin{definition}
A  {\it tower}  in $M$ of {\it height} $n\in\Z_{\ge0}$ is a sequence 
\[
T:=(I_0,J_1,J_2,\cdots,J_n) 
\]
of subsets of $M/\!\!\sim\setminus\{[1]\}$ satisfying the following:
\[
\begin{array}{rlllll}
\text{i) } & I_0\not=\emptyset \text{ and $I_0=\min(I_0)$} .  \\
\text{ii) } & I_k:=\mathrm{mcm}(J_k)\not=\emptyset   \text{ for }\  k=1,\cdots,n.\\
\text{ iii) } & J_k \subset I_{k-1} \ \ \text{such that}  \ \ 1\!<\!\#J_k  \text{ for }\  k=1,\cdots,n. 
\end{array}
\]
We call $I_0$ the {\it ground} of the tower $T$ and $I_k$  the {\it set of minimal common multiples on the $k$th stage} $J_k$ of the tower $T$. In particular, we denote  the set of minimal common multiples on the top stage by
\[
\qquad |T|:= I_n \  \  (=\mathrm{mcm}(J_n) \ \text{ if } n>0).
\]
The set of all towers in $M$  shall be denote by $\mathrm{Tmcm}(M)$. 
\end{definition}

\begin{definition} 
We put an {\it oriented graph structure on} $\mathrm{Tmcm}(M)$ as follows. 

i)  The set of vertices is equal to the set $\mathrm{Tmcm}(M)$ of all towers. 

ii)  An oriented edge from a tower $T$ of height $n$ to a tower $T'$ of height $n'$ 
is given if and only if $n'\!=\!n+1$ with 
$T=(I_0,J_1,\cdots,J_{n})$ and $T'=(I_0,J_1,\cdots,J_{n+1})$  (or, we shall write $T'=(T,J_{n+1})$ ). 
That is, $T'$ is a tower obtained by just adding one more stage above to the tower $T$.

We denoted again by $\mathrm{Tmcm}(M)$ the set equipped with this graph structure.
\end{definition}

Since any tower of height $n\ge1$ has exactly one immediate predecessor and any tower of height 0 has no predecessor,   each connected component of the graph $\mathrm{Tmcm}(M)$ is a rooted tree whose root is given by a tower of height $0$ of the form $T_0\!=\!(I_0)$ for a non-empty ground set $I_0\subset M/\!\!\sim \setminus\{[1]\}$ with $I_0\!=\!\min(I_0)$. 
Denoting the tree component by $\mathrm{Tmcm}(M,I_0)$, we have the decomposition:
\vspace{-0.2cm}
\[
\mathrm{Tmcm}(M) = \bigsqcup_{\substack{I_0\subset M/\!\!\sim \setminus\{[1]\}
\\ I_0=\min(I_0)} }  \mathrm{Tmcm}(M,I_0).
\]

\begin{example}
1.  Let $M$ be a free monoid. Then, any tower in $M$ is of height 0.
 
2. Let $M$ be a monoid, which admits least common right-multiples, that is, for any subset $J$ of $M$,
the set $\mathrm{mcm}(J)$ is either empty or consisting of a single element (in $M/\!\sim$).  Then, any tower has height at most 1.  ({\it Proof}.  For any tower $T$ of height $\ge1$, $I_1=\mathrm{mcm}(J_1)\not=\emptyset$ consists of a single element so that the Definition iii) of a tower prohibits to have $J_2$.)  

Thus, each component $\mathrm{Tmcm}(M,I_0)$ is star-shaped, consisting of the vertex $(I_0)$ (the ground) and the vertices of the form $(I_0,J_1)$ where 
$J_1$ is a finite subset of $I_0$ having more than two elements which have the common multiple.
\end{example}

\section{Generating functions $P_{M,I_0}(t)$ and  $N_{M,I_0}(t)$}

From present section, we fix a discrete degree map $\deg$ defined on a monoid $M$. Using it, 
we introduce a growth function $P_{M,I_0}(t)$ and a skew growth function $N_{M,I_0}(t)$  labeled by a set $I_0\subset M/\!\!\sim$ satisfying $I_0=\min(I_0)$. In particular, if the label set $I_0$ is the set  $\min\{M/\!\!\sim\setminus\{[1]\}\}$ of all minimal elements of $M$, we call them the growth function and the skew-growth function of $(M,\deg)$ and denote them by $P_{M,\deg}(t)$ and  $N_{M,\deg}(t)$,  respectively.


\begin{definition}
A {\it discrete degree map} on a monoid $M$ is a map
\[
\deg\  :\  M\quad  \longrightarrow \quad  \R_{\ge0}  \qquad 
\]
such that 
\  i)  \ $\deg(u)=0$ if and only if $u\sim 1$, 

\qquad \ \ \ ii) \ $\deg(uv)= \deg(u)+\deg(v)$ for any $u,v\in M$,

\qquad \ \ iii) \ $\#\{ u\in M/\!\!\sim\ \mid \ \deg(u)\le r\}<\infty$ for any $r\in\R_{>0}$.
\end{definition}
\noindent
If $u\!\mid_l\! v$ then ii) implies $\deg(u)\!\le\!\deg(v)$, and, hence, if $u\!\sim\! v$ then $\deg(u)\!=\!\deg(v)$ so that $\deg$ induces a poset map $M/\!\!\sim\to \R_{\ge0}$, denoted by the same notation "$\deg$". It is "strict" in the sense that $u\!<\!v$ implies $\deg(u)\!<\!\deg(v)$.
The  iii) implies that the range $\deg(M)$ is a discrete subset of $\R$, and
\[
d_{min}:=\inf\{ \ \deg(u) \ \mid\  u\in (M/\!\!\sim\setminus \{[1]\})\ \}
\]
is a positive constant (the {\it lowest charge} of the degree map). 
A monoid admitting a discrete degree map  automatically satisfies the descending chain condition.

\medskip
For any subset $A$ of $M/\!\!\sim$, put $\deg(A):=\inf\{\deg(u)\mid u\in A\}$.  In particular, we  call 

\vspace{-0.2cm}
\centerline{
$\deg(|T|):=\min\{\deg(\Delta)\mid \Delta\in I_n\}$ 
}

\vspace{0.2cm}
\noindent
the {\it minimal degree of the tower} $T$ of height $n$. 
\vspace{-0.1cm}

\begin{ass} {\it Let $T$ be  a tower of height $n$, then we have
}
\[
\deg(|T|)\ge (n+1)d_{min}.
\]
\end{ass}
\begin{proof} Put $T=(I_0,J_1,\cdots,J_n)$. 
Using the condition \S3 Def. iii) for a tower, we see that 
\[
\begin{array}{rllllll}
 \deg(|T|)&=     & \deg(I_n) &\ge & \deg(J_n)+d_{min} & (\S3\ \mathrm{Def. iii)})\\
&\ge& \deg(I_{n-1})+d_{min}& \ge &\deg(J_{n-1})+2d_{min}& (\S3\ \mathrm{Def.iii)})\\
&\ge &\cdots &\ge& \cdots & (\cdots) \\
&\ge& \deg(I_{1})+\!(n\!-\!1)d_{min}& \ge &\deg(J_{1}) +nd_{min}&  (\S3\ \mathrm{Def. i)})\\
&\ge& \deg(I_0)+n d_{min}& \ge&  (n+1)d_{min} & (4.2).
\end{array}
\vspace{-0.5cm}
\]
\end{proof}
\begin{rem}
i)  The existence of a discrete degree map implies automatically that the monoid  $M$ satisfies the descending chain condition.

ii) The existence of a discrete degree map implies automatically $\#(J_k)< \infty$  for any tower $T$ and $k\le$ height of $T$, since $\mathrm{mcm}(J_k)\not=\emptyset$ is assumed.
\end{rem}

Next, associated with any algebra $A$ (we shall use only the case $A\!=\!\Z$ in the present paper), let us introduce an algebra $R_A$ over $A$ equipped with a formal topology (see Remark 4.3 below). 
Set a topological $A$-module by
\[
R_A:=\Bigl\{\sum_{n=0}^{\infty} a_n t^{d_n} \Big| 
\substack{
\ a_n\in A \ (\forall n\in\Z_{\ge0}), \ \text{and}\  
\{d_n\}_{n\in\Z_{\ge0}} \text{ is}  \\ 
\text{ a sequence in $\R_{\ge0}$ divergent to $+\infty$.}
}
\Bigr\}
\]
where the system of (formal) neighbourhoods of $0\in R_A$ are given by 
\vspace{-0.2cm}
\[
\vspace{-0.2cm}
\qquad \qquad t^dR_{A}:= \Bigl\{\sum_{n=0}^{\infty} a_n t^{d_n}\in R_A \Big|  \min\{d_n\mid n\!\in\!\Z_{\ge0} \ \text{and} \ a_n\!\not=\!0\}\ge d\  \Bigr\}
\]
for $d\in\R_{\ge0}$. Then, the product w.r.t. this topology is well-defined by setting
\vspace{-0.2cm}
\[
\vspace{-0.2cm}
\sum_{d\in\R_{\ge0}}\big(\sum_{\substack{n,m\in\Z_{\ge0}\vspace{-0.1cm}\\d_n+e_m=d}} a_nb_m \big) t^{d}:=\big(\sum_{n=0}^{\infty} a_n t^{d_n}\big)\big(\sum_{m=0}^{\infty} b_m t^{e_m}\big) 
\vspace{-0.1cm}
\]
so that $R_A$ becomes an $A$-algebra.

\begin{rem}
In some literatures, from a topological view point, the ring $R_A$ is called the {\it Novikov ring}. However, for our later applications, it is convenient to regard it as the ring of {\it formal Dirichlet series} (see \cite{H-R}) as we explain below.

Let $f(t)=\sum_{n=0}^{\infty} a_n t^{d_n} \in R_A$ with $a_n\!\in\!\C$ ($n\!\in\!\Z_{\ge0}$). Then, by a change $t=\exp(-s)$ of the variable, we obtain a formal series 
\vspace{-0.3cm}
\[
f(\exp(-s))=\sum_{n=0}^{\infty} a_n \exp(-d_ns), 
\vspace{-0.3cm}
\] 
which is called a Dirichilet series (of exponential type) if $\underset{n\to\infty}{\lim}d_n\!=\!+\infty$.   
If the series converges (absolutely) at  $s_0\!\in\!\C$, then it also converges (absolutely) for all $\{s\in\!\C \mid\! \Re(s)\!>\!\Re(s_0)\}$ and defines a holomorphic function on that half plane.
The  product of  two series in  $R_\C$ is compatible with the product as holomorphic functions on their common absolutely convergent half plane. 
The holomorphic function may extends meromorphically to a branched covering region of $\C$. 
For our application ([S1,2,3,4]), we are interested in the locations of the poles and the zeros of these extended meromorphic functions (see \S5.5-7).
\end{rem}

We return to the construction of growth and skew-growth functions, which, due to the discreteness of the degree map,  belong to the ring $R_\Z$ of integral Dirichlet series. 

\bigskip
\noindent
{\bf 1.  Growth function $P_{M,\deg}(t)$ of $(M,\deg)$. }

\smallskip
For any subset $I_0$ of $M/\!\!\sim\setminus\{[1]\}$ with $I_0\!=\!\min(I_0)$, consider the submonoid  $M(I_0)$ of $M$ generated by $I_0$, that is, the smallest submonoid $N$ of $M$ satisfying 

i) an element of  $M$ whose equivalence class belongs to $I_0$ belongs to $N$, 

ii) if $u,v\in M$ satisfies $u\in N$ and $u\sim v$ then $v\in N$. 

\noindent
Then, in this new monoid $M(I_0)$, we can consider the division theory and define the equivalence relation as in \S2. Then, $M(I_0)/\!\!\sim$ is naturally embedded into $M/\!\!\sim$. Thus, the restriction of the degree map on $M$ to $M(I_0)$ induces also a degree map which we shall denote again by $\deg$. 

Then, the generating function of the degree map on $M(I_0)$, which is also called a {\it growth function} labeled by $I_0$, is defined as follows.

\[
P_{M,I_0}(t):=\sum_{u\in M(I_0)/\!\sim} t^{\deg(u)} =\sum_{d\in\R_{\ge0}} \#((M(I_0)/\!\sim)_d)\  t^{d}.
\]
Here, we put 

\centerline{
$
(M(I_0)/\!\sim)_d\!:=\!\{u\in M(I_0)/\!\sim \ \ \mid \ \deg(u)\!=\!d\}
$
}

\vspace{0.2cm}
\noindent
for any real number $d\!\in\!\R_{\ge0}$, which is a finite set due to the assumption on $\deg$, and therefore $P_{M,I_0}(t)\in R_{\Z}$.
  In particular, by choosing $I_0$ to be the minimal generating set  $I_0\!=\!\min(M/\!\!\sim\!\setminus\! \{1\})$ of $M$ 
  ($\Leftrightarrow M(I_0)\!=\!M$),  we define
 the {\it growth function of the monoid $M$ with respect to the degree map} $\deg$  by 
\[
P_{M,\deg}(t):=\sum_{u\in M/\!\sim} t^{\deg(u)} =\sum_{d\in\R_{\ge0}} \#((M/\!\sim)_d)\  t^{d}.
\]

\medskip
\noindent
{\bf 2.  Skew growth function $N_{M,\deg}(t)$ of $(M,\deg)$.}

\smallskip
For any tower $T$ of height $n$, consider the generating sum
\[
\sum_{\Delta\in |T|}
t^{\deg(\Delta)}.
\] 
It is well-defined in the ring $R_\Z$ for any tower $T$ due to the the finiteness iii) on the degree map, 
 and it belongs to the ideal $t^{\deg(|T|)}R_\Z $.

\begin{ass}
{\it 
To any non-empty set  $I_0\subset M/\!\sim\setminus\{[1]\}$ with $I_0=\min(I_0)$,  we define a {\it skew generating function} labeled by $I_0$ by
\[
N_{M,I_0}(t):= 1+ \sum_{T\in \mathrm{Tmcm}(M,I_0)}(-1)^{\#J_1+\cdots+\#J_{n}-n+1} \sum_{\Delta\in |T|}\ t^{\deg(\Delta)}
\]
where we recall that  $\mathrm{Tmcm}(M,I_0)$ is the connected component of the graph $\mathrm{Tmcm}(M)$ (consisting of all towers whose ground is the set $I_0$) and  the summation index $T$ is a tower of the form $(I_0,J_1,\cdots,J_{n})$ on the ground $I_0$.  
Then, the  formal sum  over the running index $T$ is convergent in the ring $R_\Z$.
}
\end{ass}
\begin{proof}
In order to show that the sum with respect to the running index $T\in \mathrm{Tmcm}(M,I_0)$ is convergent in the ring $R_\Z$, it is sufficient to show that for any positive number $r\in\R_{\ge0}$, the set $\{T\in \mathrm{Tmcm}(M,I_0)\mid \deg(|T|)\le r\}$ is finite.  

a) We first recall the inequality $\deg(|T|)\ge (n+1)d_{min}$ for any tower $T$ of height $n$. Thus the height of a tower whose minimal degree is bounded by a constant $r$ is bounded by $(r/d_{min})-1$. 

b) Next, let us show by induction on $n\in\Z_{\ge0}$ that the number of towers in $\#\{T\in \mathrm{Tmcm}(M,I_0)\mid \text{ the height of $T$ is equal to $n$ } \& \ \deg(|T|)\le r\}<\infty$. Since there is only one tower $T=(I_0)$ of height 0, the first induction hypothesis is satisfied.  Assume the result for $n$. Any tower $T'$ of height $n+1$ has the form $(T,J_{n+1})$ for the predecessor $T$ of height $n$. The requirement  $r\ge \deg(|T'|)$ implies the boundedness $r-d_{min} \ge \deg(|T|)$.  By induction hypothesis, the number of such $T$ is finite. Therefore, it is sufficient to show that for any tower $T\in \mathrm{Tmcm}(M,I_0)$ of height $n$, the number of its successors $T'$ such that $r\ge \deg(|T'|)$ is finite. The choice of $T'$ is determined by the choice of the subset $J_{n+1}$ of $I_n$, where  we have the equality: $\deg(|T'|)=\min\{\deg(\Delta)\mid \Delta\in |T'|=\mathrm{mcm}(J_{n+1})\} $. Since, for any $\Delta\in I_{n+1}:=\mathrm{mcm}(J_{n+1})$ and any $j\in J_{n+1}$, one has $j<\Delta$ ({\it proof.} That $j\le \Delta$ is obvious by $\Delta\in \mathrm{cm}(J_{n+1})$. But $\Delta\le j$ is impossible, if else, then any element  of $J_{n+1}$ is less of equal than $j$ which contradicts that $J_{n+1}$ consists of more than two elements (\S3 Def. ii)) so that $\deg(\Delta)\ge d_{min}+ \max\{\deg(j) \mid \ j\in J_{n+1}\}$, and, hence, $\deg(j)\le r-d_{min}$. This means that $J_{n+1}$ is a subset of $I_n':=\{j\in I_n\mid \deg(j)\le r-d_{min}\}$. However, by the discreteness condition (\S4 Def. iii)) on $\deg$, the number of elements of $I_n'$  is finite. This means the freedom of the choice of $J_n$ is also finite. 

Combining a) and b),  the proof of Assertion is completed.
\end{proof}

We define the {\it skew-growth function of} $(M,\deg)$ by  $I_0=\mathrm{min}(M/\!\!\sim\setminus\{[1]\}$:
\vspace{-0.1cm}
\[
N_{M,\deg}(t):=N_{M,\min(M/\!\!\sim\!\setminus\! \{1\})}(t).
\]

\section{Inversion formula for the growth function}

The main result of the present paper is formulated in the following theorem.

\begin{theo}
Let $M$ be a cancellative monoid equipped with a discrete degree map.  
Then we have the inversion formula in the ring $R_\Z$:
\[
\!\!\!\!\!\!\!\!\!\!\!\!\!\!(*) \qquad \qquad\qquad \qquad 
P_{M,\deg}(t)\cdot N_{M,\deg}(t) =1.\qquad \qquad \qquad \qquad 
\]
\end{theo}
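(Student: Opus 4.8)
The plan is to prove the identity $P_{M,\deg}(t)\cdot N_{M,\deg}(t)=1$ by expanding the product and showing that the coefficient of $t^d$ vanishes for all $d>0$ (the coefficient of $t^0$ being clearly $1$, since $P$ has constant term $1$ for the class $[1]$ and $N$ has constant term $1$). Concretely, $P_{M,\deg}(t)=\sum_{u\in M/\sim}t^{\deg(u)}$, so multiplying by $N_{M,\deg}(t)$ produces, for each tower $T=(I_0,J_1,\dots,J_n)$ and each $\Delta\in|T|$, a contribution $(-1)^{\#J_1+\cdots+\#J_n-n+1}\,t^{\deg(u)+\deg(\Delta)}$; but it is more natural to reindex: since $|T|=\mathrm{mcm}(J_n)$ consists of multiples of the elements of $I_0$, each $\Delta\in|T|$ itself lies in $M/\sim$, and $u\cdot\Delta$ ranges over multiples of $\Delta$. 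So I would rewrite $P_{M,\deg}\cdot\bigl(\sum_{\Delta\in|T|}t^{\deg\Delta}\bigr)=\sum_{\Delta\in|T|}\sum_{w:\ \Delta\mid_l w}t^{\deg w}\cdot(\text{correction})$ — the subtlety being that $\{w:\Delta\mid_l w\}$ is not simply $\{u\Delta:u\in M/\sim\}$ because left-multiplication is on the wrong side. The right move is instead: $P_{M,\deg}(t)\cdot t^{\deg\Delta}=\sum_{u}t^{\deg(u)+\deg(\Delta)}$, and $\{u\Delta: u\}$, as a multiset counted with multiplicity, overcounts $\{w:\Delta\mid_l w\}$; but cancellativity of $M$ (hence of $M/G$, using the Assertion of \S2) guarantees $u\Delta=u'\Delta\Rightarrow u=u'$ on the right-multiplier side — wait, that gives right-cancellation by $\Delta$, so $u\mapsto u\Delta$ is injective into $\{w:\Delta\mid_l w,\ \text{via the specific factorization}\}$. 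I would set up the bookkeeping so that $P_{M,\deg}(t)\cdot t^{\deg\Delta}$ equals the generating series of $\{w\in M/\sim: \Delta\mid_l w\}$ exactly.

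With that in hand, fix an arbitrary element $w\in M/\sim\setminus\{[1]\}$ of degree $d>0$; I want the total coefficient of $t^{\deg w}$ in the product to be $0$. The $t^{\deg w}$ coming from $P_{M,\deg}\cdot 1$ contributes $+1$. Every other contribution comes from a pair $(T,\Delta)$ with $\Delta\in|T|$ and $\Delta\mid_l w$, contributing the sign $\varepsilon(T):=(-1)^{\#J_1+\cdots+\#J_n-n+1}$. So the claim reduces to the purely combinatorial identity
\[
1+\sum_{\substack{(T,\Delta):\ \Delta\in|T|,\ \Delta\mid_l w}}\varepsilon(T)\ =\ 0,
\]
i.e. $\sum_{(T,\Delta):\,\Delta\mid_l w}\varepsilon(T)=-1$, where $(T,\Delta)$ ranges over all towers $T\in\mathrm{Tmcm}(M,I_0)$ (with $I_0=\min(M/\sim\setminus\{[1]\})$) together with a choice of $\Delta\in\mathrm{mcm}(J_n)$ dividing $w$ from the left. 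This is a statement about the finite poset of divisors of $w$ in $M/\sim$, and it should be provable by a sign-reversing involution or an inclusion–exclusion / Möbius-type argument on the tree $\mathrm{Tmcm}(M,I_0)$. Note that by Fact (\S2), the set of minimal divisors of $w$ in $I_0$ is nonempty, so $J_1$ can always be chosen among divisors of $w$; the height-$0$ tower $(I_0)$ contributes via $|T|=I_0$, and the relevant $\Delta\in I_0$ with $\Delta\mid_l w$ are exactly $\min\{j\in I_0: j\mid_l w\}$.

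The heart of the matter, and the step I expect to be the main obstacle, is establishing this combinatorial identity. The natural strategy: induct on $w$ with respect to the descending chain condition / the degree, or better, set up a bijective cancellation. Group the summands $(T,\Delta)$ by the \emph{set} $S_w:=\min\{j\in I_0: j\mid_l w\}$ of minimal ground-generators dividing $w$. Towers with $J_1\subsetneq$ (various subsets of $S_w$) come with alternating signs $(-1)^{\#J_1}$ from the factor $(-1)^{\#J_1-1}$ in $\varepsilon$; summing $\sum_{\emptyset\neq J_1\subset S_w,\ \#J_1>1}(-1)^{\#J_1-1}(\cdots)$ together with the $\#J_1=1$ contributions (which re-index into the generating series of $w$ itself) and the height-$0$ term should telescope. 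More precisely, I would try to show that for each fixed $w$, the contributions organize into blocks indexed by "how far up the tower the common divisor $\Delta$ first fails to be forced", and a standard inclusion–exclusion over the Boolean lattice of each $\mathrm{mcm}$-set collapses everything down. The technical friction will be: (a) ensuring the reindexing $u\Delta\leftrightarrow w$ is genuinely a bijection (this is exactly where cancellativity is used — without it the multiplicities are wrong); (b) handling the constraint $\#J_k>1$ in the tower definition, which forces a separate accounting of the "singleton" reductions; and (c) verifying that the formal sums all converge in $R_\Z$ so that rearrangement is legitimate — but the convergence is already guaranteed by the Assertion of \S4 bounding $\deg(|T|)\ge(n+1)d_{\min}$, so for fixed $\deg w$ only finitely many $(T,\Delta)$ contribute, and the rearrangements are finite. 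Once the combinatorial identity $\sum_{(T,\Delta):\,\Delta\mid_l w}\varepsilon(T)=-1$ is secured for every $w$ of positive degree, the theorem follows by comparing coefficients.
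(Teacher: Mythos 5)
Your setup is sound and in fact mirrors the paper's strategy: compare coefficients, use cancellativity to identify the generating series of $\{w:\Delta\mid_l w\}$ with $t^{\deg\Delta}P_{M,\deg}(t)$, and reduce the theorem to the signed identity $1+\sum_{(T,\Delta):\,\Delta\in|T|,\,\Delta\mid_l w}\varepsilon(T)=0$ (your per-element version is essentially the ``motivic'' form of Remark 5.4). But the proposal has a genuine gap exactly where you say you expect ``the main obstacle'': the combinatorial identity itself is never proved, only predicted to ``telescope'' via a sign-reversing involution or a standard inclusion--exclusion over the Boolean lattice of each $\mathrm{mcm}$-set. This is the mathematical core of the theorem, and it is not a routine M\"obius computation: a single inclusion--exclusion over subsets $J_1\subset I_0$ does not close up, because $M_{d,J_1}=M_d^{\mathrm{mcm}(J_1)}$ and the set $\mathrm{mcm}(J_1)$ is in general not a singleton, so one must iterate, and the iteration is what forces the towers into the picture. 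The paper carries this out by running the inclusion--exclusion to a fixed depth $n\ge d/d_{\min}$ (formula $(\bigstar)$), proving that any index string $(J_1,\dots,J_n)$ with nonempty $M_{d,J_1,\dots,J_n}$ must eventually hit a singleton $\{\Delta\}$ and then stabilize with $M_{d,J_1,\dots,J_n}=\hat\Delta\cdot M_{d-\deg\Delta}$ (Assertion 5.1, using the degree bound $\deg(J_n)\ge n\,d_{\min}$), and then exhibiting an explicit sign-preserving bijection between such ``singleton-padded'' index strings and pairs $(\Delta,T)$ with $\Delta\in|T|$ (Assertion 5.2); the padding is precisely how the constraint $\#J_k>1$ in the definition of a tower is reconciled with the unconstrained subsets appearing in inclusion--exclusion, i.e.\ your friction point (b). None of this bookkeeping is supplied in your proposal, and without it the claimed identity $\sum_{(T,\Delta)}\varepsilon(T)=-1$ is an assertion, not a proof. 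You also need, at each stage, that only finitely many $J_k$ give a nonempty set (this follows from discreteness of $\deg$, not merely from the convergence bound $\deg(|T|)\ge(n+1)d_{\min}$ you cite).

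A smaller but real defect: your discussion of which side the multiplication acts on is muddled. Left-divisibility $\Delta\mid_l w$ means $w=\hat\Delta u$, so the relevant map is $u\mapsto\hat\Delta u$ (left multiplication), injective on $M/\!\sim$ by cancellativity, and one must also check compatibility with the equivalence $\sim$ in both directions so that it gives a genuine bijection of $M/\!\sim$ onto $\{w\in M/\!\sim:\,\Delta\mid_l w\}$ independent of the representative $\hat\Delta$. You flag the issue (``wait, that gives right-cancellation'') but then simply announce the desired conclusion; in a complete proof this identification must be stated and verified correctly, as it is the only place cancellativity enters.
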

\begin{proof}
For $d\in\R_{\ge0}$, put $m_d:=\#(\{u\in M/\!\sim\ \mid \deg(u)=d\} )$ so that  $P_{M,\deg}(t)=\sum_{d\in\R_{\ge0}} m_dt^d$.  Then, $(*)$ is equivalent to the  "infinite recursion" relation
\vspace{-0.2cm}
\[
\!\!\!\!\!\!\!(**) \qquad
m_d\ \ +\sum_{T\in \mathrm{Tmcm}(M,I_0)}(-1)^{\#J_1+\cdots+\#J_{n}-n+1} \sum_{\Delta\in |T|} m_{d-\deg(\Delta)}=0
\vspace{-0.2cm}
\]
for all $d\in\R_{> 0}$ due to the definition of the product  of formal Dirichlet series.

For any subset  $I$ and $J$ of $M/\!\sim$, let us introduce two sets:
\[\begin{array}{rlll}
\vspace{-0.05cm}
M^I &:=& \{u\in M/\!\sim\ \mid \exists \Delta\in I \text{ s.t. }  \Delta|_l u\}\\
M_J &:=& \{u\in M/\!\sim\ \mid \forall \Delta\in J \text{ s.t. }  \Delta|_l u\}.
\end{array}
\]\footnote
{The notation $M_J$ is confusing with  $M_d$ given in the previous section. However, we shall only use the suffixes $d$ and $J$ (or $J_1,J_2,\cdots$) so that they should be distinguished.
}
Note that we have an obvious relation:
\[
M_J= M^{\mathrm{mcm}(J)}  \quad \text{ and} \quad M^I=\cup_{J\in 2^I\setminus\{\emptyset\}} M_J
\]
for any subset $J$ and $I$ of $M/\!\sim$.  In particular, we have $M^\emptyset=\emptyset$ and $M_\emptyset=M/\!\sim$.

We also put  $M_d^I:=M_d\cap M^I$ and $M_{d,J}:=M_d\cap M_J$.

Since $M_d= M_d^{I_0}$ for $d\!>\!0$, we express $M_d=\cup_{\Delta\in I_0}M_{d,\{\Delta\}}=\cup_{J_1\in 2^{I_0}\setminus\{\emptyset\}} M_{d,J_1}$, where we have  $M_{d,J_1}=\cap_{\Delta\in J_1} M_{d,\{\Delta\}}$ for any non-empty subset $J_1 \subset I_0$,  and $M_{d,J_1}$ may be an empty set. We remark that $\{ J_1\in 2^{I_0}\mid M_{d,J_1}\not=\emptyset\}$ is a finite set of finite subsets of $I_0$,  since $M_{d,J_1}\!\not=\!\emptyset$ implies $J_1$ should be a subset of $\{\Delta\in I_0\mid \deg(\Delta)\le d\}$ which is finite due to the discreteness of the degree map.
Therefore, the following is a finite sum and has a meaning.
\[
\vspace{-0.1cm}
\#(M_d)=\sum_{J_1\in 2^{I_0}\setminus\{\emptyset\}}(-1)^{\#J_1-1}\#(M_{d,J_1}).
\vspace{-0.1cm}
\]

Since we have $M_{d,J_1}=M_d^{\mathrm{mcm}(J_1)}$, by putting $I_1:=\mathrm{mcm}(J_1)$, we have  $M_{d,J_1}=\cup_{\Delta\in I_1}M_{d,\{\Delta\}}$, and again decompose 
\vspace{-0.1cm}
\[
M_{d,J_1}=\cup_{J_2\in 2^{I_1}\setminus\{\emptyset\}} M_{d,J_1,J_2},
\vspace{-0.1cm}
\]
where $M_{d,J_1,J_2}\!:=\!M_{d,J_1}\!\cap\! M_{J_2} (=\!M_{d,J_2})$.
Even if $I_1$ may be infinite, we get again  a finite sum:
\[
\vspace{-0.1cm}
\#(M_{d,J_1})=\!\!\!\sum_{J_2\in 2^{I_1}\setminus\{\emptyset\}} (-1)^{\#J_2-1}\#(M_{d,J_1,J_2}).
\vspace{-0.1cm}
\]
Repeating the same process $n$-times, we obtain a formula
%
\[
\!\!\!\!\!(\bigstar) \ \#(M_d)=\!\!\!\!\!\!\sum_{J_1\in 2^{I_0}\!\setminus\!\{\emptyset\}}\!\sum_{J_2\in 2^{I_1}\!\setminus\!\{\emptyset\}}\!\!\!\!\cdots \!\!\!\!\!\sum_{J_n\in 2^{I_{n-1}}\!\setminus\!\{\emptyset\} }\!\!\!\!\!\!(-1)^{\#J_1\!+\!\#J_2\!+\!\cdots\!+\!\#J_n-n}\#(M_{d,J_1,J_2,\cdots,J_n})
\]
where we put $I_k=\mathrm{mcm}(J_k)$ ($k=1,\cdots,n-1$), and some of $M_{d,J_1,J_2,\cdots,J_n}=M_d\cap M_{J_1}\cap\cdots\cap M_{J_n} (=M_{d,J_n})$ may be an empty set.

\begin{ass}
{\bf i)}  {\it 
If  a running index $J_k$ of the formula $(\bigstar)$ for some $1\le k\le n$ consists only  of a single element, say $\Delta$, then  we have $J_k\!=\!J_{k+1}\!=\!\cdots\!=\!J_n\!=\!\{\Delta\}$ and $M_{d,J_1,\cdots,J_k}\!=\!M_{d,J_1,\cdots,J_k,J_{k+1}}\!=\!\cdots\!=\!M_{d,J_1,\cdots,J_k,\cdots,J_n}\!=\!\hat\Delta\cdot M_{d-\deg(\Delta)}$, where $\hat \Delta$ is any representative in $M$ of the equivalence class $\Delta\in M/\!\sim$.

{\bf ii)}  If $n\ge d/d_{min}$, then, for any running index $(J_1,\cdots,J_n)$ of the formula $(\bigstar)$, either the set $J_n$ consists of a single element, or the set $M_{d,J_1,J_2,\cdots,J_n}$ is empty.
}
\end{ass}
\begin{proof}
i) The fact $J_{k}=\{\Delta\}$ implies that $M_{d,J_1,\cdots,J_k}=\hat\Delta\cdot M_{d-\deg(\Delta)}$. On the other hand, we have $I_k:=\mathrm{mcm}(J_k)=\{\Delta\}$. Therefore, if $k<n$, then the only possible choice of $J_{k+1}$  is the only non-empty subset of $I_k$, i.e. $\{\Delta\}$. 

ii)  If $\#{J_n}\ge 2$. Then, due to above i), we should have all of $J_1,\cdots,J_n$ must have the cardinality greater or equal than 2. That is, for any $j\in J_{k+1}\subset \mathrm{mcm}(J_k)$, we have $\deg(j)\ge \max\{\deg(\Delta)\mid \Delta\in J_k\}+d_{min}\ge \deg(J_k)+d_{min}$ for $k=1,\cdots,n-1$ so that  we have $\deg(J_n)\ge \deg(J_{n-1})+d_{min}\ge \cdots\ge \deg(J_1)+(n-1)d_{min}\ge n d_{min}$. Therefore, any element of $M_{d,J_1,\cdots,J_n}=M_{d,J_n}$ (if it exists) has degree at least $\deg(J_n)\ge nd_{min}>d$, which is impossible. 
\end{proof}

\begin{ass}
{\it 
For any fixed $n\in\! \Z$ such that $n\ge d/d_{min}$,  we have a bijection
\[
\begin{array}{rll}
&& \{(\Delta,T)\in (M/\!\sim\times \mathrm{Tmcm}(I_0))\mid \Delta\in |T|, \deg(\Delta)\le d\}\\
&\simeq&
\{(J_1,\cdots,J_n)\mid J_1\!\subset\! I_0, \ J_k\!\subset\! \mathrm{mcm}(J_{k-1})\ (2\!\le\!k\!\le\! n),\ M_{d,J_1,\cdots,J_n}\not=\emptyset\}.
\end{array}
\]
If $(\Delta,T=(I_0,J_1,\cdots,J_{n_0}))$ and $(J_1,\cdots,J_n)$ are corresponding to each other, then we have the inequality $n_0<n$ and the equality
}
\[
(-1)^{\#J_1\!+\!\#J_2\!+\!\cdots\!+\!\#J_{n_0}-n_0}m_{d-\deg(\Delta)} = 
(-1)^{\#J_1\!+\!\#J_2\!+\!\cdots\!+\!\#J_n-n}\#(M_{d,J_1,J_2,\cdots,J_n}) .
\]
\end{ass}
\begin{proof}
a) Let $(\Delta,T)$ be an element in LHS, and let $T=(I_0,J_1,\cdots,J_{n_0})$. The condition implies $d\ge\deg(\Delta)\ge\deg(|T|)\ge (n_0+1)d_{min}$, and hence $n_0<d/d_{min}\le n$. Then $(J_1,\cdots,J_{n_0},\{\Delta\},\cdots,\{\Delta\})$ belongs to RHS.

b) Let $(J_1,\cdots,J_n)$ be an element in RHS such that $M_{d,J_1,\cdots,J_n}\not=\emptyset$. Due to Assertion 5.1 ii), in such case, we have $\#(J_n)=1$.  For such index, put $n_0+1=\inf\{1\le k\le n\mid \#J_k=1\}$ for some $0\le n_0<n$. Then $J_{n_0+1}=\{\Delta\}$ and $\Delta\in \mathrm{mcm}(J_{n_0})$ (or $\in I_0$ if $n_0=0$).  Since $\#(J_{n_0})\ge 2$ if $n_0\ge1$, $T:=(I_0,J_1,\cdots,J_{n_0})$ is a tower such that $(\Delta,T)$ belongs to LHS. 

It is clear that a) and b) are inverse to each other. Suppose $(J_1,\cdots,J_n)\leftrightarrow (\Delta,T)$, then we have the equality 
\[
\begin{array}{rlll}
& (-1)^{\#J_1\!+\!\#J_2\!+\!\cdots\!+\!\#J_n-n}\#(M_{d,J_1,J_2,\cdots,J_n})\\
=& (-1)^{\#J_1\!+\!\#J_2\!+\!\cdots\!+\!\#J_{n_0+1}-(n_0+1)}\#(M_{d,J_1,J_2,\cdots,J_{n_0+1}})\\
=& (-1)^{\#J_1\!+\!\#J_2\!+\!\cdots\!+\!\#J_{n_0}-n_0}\#(M_{d,J_{n_0+1}})\\
=& (-1)^{\#J_1\!+\!\#J_2\!+\!\cdots\!+\!\#J_{n_0}-n_0}\#(\hat\Delta\cdot M_{d-\deg(\Delta)}).
\end{array}
\]
 The cacellativity of the monoid $M$ implies the bijection $\hat\Delta\cdot M_{d-\deg(\Delta)}\!\simeq \!M_{d-\deg(\Delta)}$. Here, the bijection $\hat \Delta\cdot u\!\leftrightarrow\! u$ depends on a choice of $\hat \Delta$. However it does not effect on the enumeration of the cardinality of both hand sides, so that the last term is equal to 
 $
 (-1)^{\#J_1\!+\!\#J_2\!+\!\cdots\!+\!\#J_{n_0}-n_0}\#(M_{d-\deg(\Delta)}). 
$
\end{proof}

Last Assertion shows that the formula $(\bigstar)$ is the same as the recursion formula $(**)$.
This completes the proof of the recursion formula $(**)$, and the inversion formula $(*)$ of Theorem is proven.
\end{proof}
\begin{cor}
For any subset $I_0$ of $M/\!\!\sim\setminus\{[1]\}$ with $I_0=\min(I_0)$, we have
\vspace{-0.1cm}
\[
P_{M,I_0}(t) \cdot N_{M,I_0}(t)=1 .
\]
\end{cor}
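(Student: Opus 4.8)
The plan is to prove the Corollary by repeating the argument of the Theorem verbatim with $M$ replaced by the submonoid $M(I_0)$. First I would observe that everything in the proof of the Theorem that was special about the ground set $\min(M/\!\!\sim\setminus\{[1]\})$ was only used through the single equality $M_d = M_d^{I_0}$ for $d>0$, i.e. that every nonunit of $M$ is left-divisible by a minimal generator. So the strategy is to check that the analogous equality holds inside $M(I_0)$ and that all the other ingredients (the descending chain condition, the discrete degree map, the finiteness of the relevant index sets, cancellativity) survive restriction to $M(I_0)$.

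The key steps, in order, are: (1) recall from \S4 that $M(I_0)$ is a cancellative monoid, that the inclusion $M(I_0)/\!\!\sim \hookrightarrow M/\!\!\sim$ is a poset embedding, and that the restriction of $\deg$ is again a discrete degree map on $M(I_0)$ (its lowest charge $d_{min}$ may only go up, which is harmless); (2) verify that $I_0$, regarded as a subset of $M(I_0)/\!\!\sim$, is exactly $\min\bigl(M(I_0)/\!\!\sim\setminus\{[1]\}\bigr)$, so that in the monoid $M(I_0)$ the set $I_0$ plays precisely the role that the full minimal generating set played for $M$ in the Theorem — in particular $\bigl(M(I_0)/\!\!\sim\bigr)_d = \bigl(M(I_0)/\!\!\sim\bigr)_d^{I_0}$ for $d>0$; (3) note that the operations $\mathrm{cm}$, $\min$, $\mathrm{mcm}$ computed inside $M(I_0)/\!\!\sim$ agree with those computed inside $M/\!\!\sim$ when applied to subsets of $M(I_0)/\!\!\sim$, because $M(I_0)/\!\!\sim$ is closed under the formation of common multiples of its own subsets by construction (any common multiple lies in the submonoid generated by $I_0$); hence the tree $\mathrm{Tmcm}(M(I_0),I_0)$ coincides with the component $\mathrm{Tmcm}(M,I_0)$ of $\mathrm{Tmcm}(M)$, and the two definitions of $N_{M,I_0}(t)$ match; (4) now run the proof of the Theorem word for word with $M(I_0)$ in place of $M$: the recursion $(**)$ for the coefficients of $P_{M,I_0}(t)$ follows from $(\bigstar)$, Assertion 5.1 and Assertion 5.2, all of which used only cancellativity and the structure of the poset with its $M(I_0)$-action.

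The main obstacle — really the only point that needs care rather than bookkeeping — is step (3): one must make sure that passing to the submonoid does not change the minimal common multiple sets, i.e. that $\mathrm{mcm}$ is "intrinsic" to $M(I_0)$. This is where the defining property of $M(I_0)$ (being the smallest $\sim$-saturated submonoid containing $I_0$) is used: starting from subsets of $I_0$ and iterating $\mathrm{mcm}$ never leaves $M(I_0)/\!\!\sim$, because every element of $\mathrm{cm}(J)$ for $J\subset M(I_0)/\!\!\sim$ is a multiple of an element of $J$, hence lies in $M(I_0)$. Once this is pinned down, the cancellativity step in Assertion 5.2 goes through unchanged (the representative $\hat\Delta$ can be chosen in $M(I_0)$), and the Corollary follows. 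I would phrase the whole argument as: \emph{"Apply the Theorem to the cancellative monoid $(M(I_0),\deg)$, noting that $I_0 = \min(M(I_0)/\!\!\sim\setminus\{[1]\})$ and that $\mathrm{Tmcm}(M(I_0),I_0) = \mathrm{Tmcm}(M,I_0)$; then $P_{M(I_0),\deg} = P_{M,I_0}$ and $N_{M(I_0),\deg} = N_{M,I_0}$ by definition."}
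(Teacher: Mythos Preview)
Your overall strategy --- apply the Theorem to the submonoid $M(I_0)$ --- is exactly the paper's one-line proof (``replace $M$ by $\langle I_0\rangle$''), and your final italicized summary is precisely what the paper intends.

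However, your justification of step~(3) has a genuine gap. You write that ``every element of $\mathrm{cm}(J)$ for $J\subset M(I_0)/\!\!\sim$ is a multiple of an element of $J$, hence lies in $M(I_0)$.'' The ``hence'' fails: being a right multiple in $M$ of an element of $M(I_0)$ does not force membership in $M(I_0)$, since $M(I_0)$ is only a sub\emph{monoid}, not a right ideal. (Take $M$ free on $a,b$ and $I_0=\{a\}$: then $ab\in\mathrm{cm}_M(\{a\})$ but $ab\notin M(I_0)$.) So $\mathrm{cm}_M(J)$ is in general strictly larger than $\mathrm{cm}_{M(I_0)}(J)$; and even when an element of $\mathrm{mcm}_M(J)$ happens to lie in $M(I_0)/\!\!\sim$, you have not argued that minimality in $M/\!\!\sim$ coincides with minimality in $M(I_0)/\!\!\sim$, because the left-divisibility relation itself may differ between the two monoids (having $v=ux$ with $x\in M$ does not give $x\in M(I_0)$). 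Thus the identification $\mathrm{Tmcm}(M(I_0),I_0)=\mathrm{Tmcm}(M,I_0)$, on which the equality $N_{M(I_0),\deg}=N_{M,I_0}$ rests, is not established by your argument. The paper's proof simply asserts the reduction without comment; you have correctly located the only nontrivial point in the corollary, but the reason you give for it does not go through as written.
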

\noindent
{\it Proof.} We have only to replace the monoid $M$ by its submonoid  $\langle I_0\rangle$. \qquad $\Box$

\begin{rem}  The inversion formula can be formulated in a "motivic" form:
\[
\vspace{-0.1cm}
\leftline{\!\!\!\!\! $(***)
\qquad\qquad   \qquad\qquad   \qquad\qquad      
\widehat{N}_{M}\ \cdot\ P_{M} \ \ =\ \  [1]$}
\]
where $P_{M}:=\sum_{u\in M/\!\sim}  u $ is an element of the module $\prod_{u\in M/\!\sim}\Z\cdot u$, on which
\vspace{-0.1cm}
\[
\widehat{N}_{M}:= 1+ \sum_{T\in \mathrm{Tmcm}(M,I_0)}(-1)^{\#J_1+\cdots+\#J_{n}-n+1} \sum_{\Delta\in |T|}\ \hat{\Delta} 
\vspace{-0.1cm}
\]
acts from left according to  {\bf Remark 2.2}.  The proof of  $(***)$ is a word for word translation of that of $(*)$ to "motivic" form.  Then the inversion formula $(*)$ is an immediate  consequence of $(***)$  by applying the homomorphism: 

\smallskip
\centerline{
$u\mapsto t^{\deg(u)}=e^{-\deg(u)s}$ \ \ and \ \ $\hat\Delta\mapsto t^{\deg(\Delta)}=e^{-\deg(\Delta)s}$.
}
\end{rem}

\begin{rem}
If the both Dirichlet series $P_{M,I_0}(\exp(-s))$ and $N_{M,I_0}(\exp(-s))$ 
converge absolutely in some half plane $\{s\!\in\!\C\mid \Re(s)\!>\!c\}$ for some $c\!\in\!\R$, then the inversion formula gives a functional equation $P_{M,I_0}(\exp(\!-\!s))N_{M,I_0}(\exp(\!-\!s))\!=\!1$ 
on the half plane. This implies, in particular,  that they do have neither zeros nor poles on the half plane. Let us denote by the same $P_{M,I_0}(\exp(\!-\!s))$ and $N_{M,I_0}(\exp(\!-\!s))$ their meromorphic continuations (including algebraic branches), respectively. Obviously, the functional equation extends meromorphically, i.e.\  one meromorphic continuation determines the other as the inverse function,  where the poles and the zeros of them interchange to each other.
Motivated by a trace formula for thermo-dynamical limit functions ([S1-4]),  we are interested in zeros of $N_{M,I_0}(\exp(-s))$. In particular, we ask the followings (c.f.\ \cite{T}).
\end{rem}

\begin{conj} 
If the Dirichlet series $P_{M,I_0}(\exp(-s))$ converges absolutely on some half plane $\{s\!\in\!\C\mid \Re(s)\!>\!\sigma\}$ for some $\sigma\!\in\!\R$, then $N_{M,I_0}(\exp(-s))$ also converges absolutely at least on the same half plane. 
\end{conj}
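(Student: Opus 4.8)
First I would pass to the submonoid $\langle I_0\rangle$ via the Corollary, so that we may assume $I_0=\min(M/\!\!\sim\setminus\{[1]\})$ and write $P=P_{M,\deg}$, $N=N_{M,\deg}$. Grouping the tower contributions in $N$ by degree, write $N(t)=\sum_d n_d\,t^{d}$; absolute convergence of $N(\exp(-s))$ on $\{\Re s>\sigma'\}$ means $\sum_d|n_d|\,x^d<\infty$ for $0<x<\exp(-\sigma')$, so the goal is to establish this for every $x<\exp(-\sigma)$. By the Inversion Formula the sequence $(n_d)$ is the formal reciprocal of $(m_d)$, where $m_d\ge0$ and $m_0=1$. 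The first, and easy, step is a majorant bound: writing $P(t)=1+R(t)$ with $R(t)=\sum_{d>0}m_d\,t^{d}$, the identity $N=\sum_{k\ge0}(-1)^kR^k$ holds term by term in $R_\Z$ (each $R^k$ has lowest degree $\ge k\,d_{min}\to\infty$), so, comparing coefficients,
\[
\sum_d|n_d|\,x^d\ \le\ \sum_{k\ge0}R(x)^k\ =\ \frac{1}{\,2-P(x)\,}\qquad\text{whenever }P(x)<2 .
\]
Hence $N(\exp(-s))$ already converges absolutely on $\{\Re s>\sigma_0\}$, where $\sigma_0:=\inf\{\sigma'\mid P(\sigma')<2\}$; since $P(\sigma')\downarrow1$ as $\sigma'\uparrow+\infty$ one has $\sigma_0\ge\sigma$, with equality unless $P$ stays $\ge2$ just to the right of $\sigma$. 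So the real work is confined to the strip $\sigma<\Re s\le\sigma_0$, the range where $P$ is finite but not small.

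For that strip the plan is to prove that $P(\exp(-s))$ has no zeros on $\{\Re s>\sigma\}$ (it is holomorphic there, and singular at $s=\sigma$ by Landau's theorem, since $m_d\ge0$). Given zero-freeness, $N=1/P$ is holomorphic on $\{\Re s>\sigma\}$; to upgrade this to absolute convergence of the Dirichlet series I would re-use the tower expansion, now to bound $|n_d|$ rather than merely to define it. Concretely: organise $N$ by the height $n$ of towers, and estimate, for finite $J\subset M/\!\!\sim$ with $\mathrm{mcm}(J)\ne\emptyset$, using cancellativity (the right-multiples of any $j\in J$ are, modulo $\sim$, in degree-shifting bijection with $M/\!\!\sim$; cf. the proof of the Theorem),
\[
\sum_{\Delta\in\mathrm{mcm}(J)}x^{\deg(\Delta)}\ \le\ \sum_{\Delta\in\mathrm{cm}(J)}x^{\deg(\Delta)}\ \le\ x^{\,\max_{j\in J}\deg(j)}\;P(x) ,
\]
and combine this with the chain of inequalities behind the estimate $\deg(|T|)\ge(n+1)d_{min}$ of \S4 — each new stage of a tower raises $\max_{j}\deg(j)$ by at least $d_{min}$ — to extract a factor $(x^{d_{min}})^{n}$ for height-$n$ towers. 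Summing over all towers then leaves the need for a bound on the number of height-$n$ towers grounded on $I_0$ whose top stage has degree $\le D$.

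That last count is the hard part. A priori it is only doubly-exponential in $n$ (at each stage one chooses an arbitrary subset of a set that may itself be large), which swamps the $(x^{d_{min}})^n$ decay; a direct combinatorial estimate would need something like an a priori linear bound on the width of the poset $M/\!\!\sim$ in each degree-slice, which the hypotheses do not provide. So the genuine content is to harness the alternating signs of $N$, i.e. the cancellation among towers, which is exactly the zero-freeness of $P$ on its half-plane of absolute convergence. This cancellation is manifest whenever the tower sum factors as an Euler-type product, the model being the abelian Gaussian monoid $(\Z_{>0},\log)$, where $N(\exp(-s))=\prod_p(1-p^{-s})$ converges absolutely precisely on $\{\Re s>1\}$, the half-plane of $\zeta$. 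I would try to leverage the ``motivic'' form $(***)$ of the Inversion Formula together with the left $M$-action on $M/\!\!\sim$ (Remark 2.2), and the structural fact that the monoid forces $P(x)\ge1/(1-x^{d_{min}})$, to produce such a factorisation — or a partial surrogate for it — in general; whether this can always be done, or whether instead some $P$ genuinely acquires a zero in its half-plane of absolute convergence (which would refute the conjecture), is the crux and is left open.
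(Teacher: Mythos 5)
This statement is Conjecture~5.6 of the paper: the author poses it as an open problem and offers no proof (only supporting examples in \S5--\S6 and the remark that the examples of \S6 satisfy it), so there is no ``paper proof'' for your argument to be measured against. Your proposal, to its credit, is honest that it does not close the question either, and the correct parts are genuinely correct but strictly weaker than the conjecture. The Neumann-series majorant is fine: since $P\cdot N=1$ in $R_\Z$ and $P=1+R$ with $R$ having nonnegative coefficients and lowest degree $\ge d_{min}$, one has $N=\sum_{k\ge0}(-1)^kR^k$ coefficientwise, whence $\sum_d|n_d|x^d\le 1/(2-P(x))$ wherever $P(x)<2$. But this only gives absolute convergence of $N$ on the possibly much smaller region $\{P<2\}$, not on the full half-plane of absolute convergence of $P$, which is exactly where the conjecture has content.

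The remaining plan has two gaps, and you have named the second one yourself. First, even granting zero-freeness of $P(\exp(-s))$ on $\{\Re s>\sigma\}$ (which is itself not established --- it is a consequence of the conjecture, not an independent input; Dirichlet series with nonnegative coefficients can vanish inside their half-plane of absolute convergence, e.g.\ $1+a\,2^{-s}$ with $a>1$), holomorphy and zero-freeness of $1/P$ do not imply that its Dirichlet expansion converges absolutely there: the abscissa of absolute convergence of a Dirichlet series can lie strictly to the right of its abscissa of holomorphy, so the analytic reduction does not by itself upgrade to the statement being claimed. Second, the quantitative route through towers founders exactly where you say it does: the per-stage bound $\sum_{\Delta\in\mathrm{mcm}(J)}x^{\deg\Delta}\le x^{\max_j\deg j}P(x)$ is correct (by cancellativity), and the degree gain $d_{min}$ per stage is the Assertion of \S4, but without control on the \emph{number} of admissible subsets $J_{k+1}\subset I_k$ at each stage the sum over towers is not dominated by a geometric series, and the needed cancellation among the signed terms is precisely the unproven content of the conjecture. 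So the proposal is a reasonable partial analysis plus a program, not a proof --- which is consistent with the status of the statement in the paper itself.
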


\begin{prob}
i)  Clarify the class of pair  $(M,\deg)$ of a cancellative monoid  with a discrete degree map on it, for which  following {\bf Assertion} holds. \footnote
{
This condition on the class of $(M,\deg)$ comes from a view point of the limit functions [S1]. One would like to think more stronger class of $(M,\deg)$, where  $N_{M,\deg}(\exp(-\!s))$ continues holomorphically on a open neighbourhood of the axis $\{\Re(s)\!=\!\sigma_0\}\!\subset\! \C$, where the order of zeros at $s\!=\!\sigma_0$ is the maximal among all zeros  on $\Re(s)=\sigma_0$.
}

  \smallskip
\noindent
{\bf Assertion.}\ {\it Let $\sigma_0\!\in\! \R$ be the minimal of $\sigma$'s in {\bf Conjecture 5.6}.\ Then  the Dirichlet series  $N_{M,\deg}(\exp(-\!s))$ continues holomorphically on a open neighborhood of the point $\sigma_0\!\in\!\!\C$ where the extended function takes value 0 at $\sigma_0$.\!}

\smallskip
\noindent
ii)   Determine the order of zeros of $N_{M,I_0}(\exp(-s))$ at $\sigma_0$ for monoids $(M,\deg)$ in the class in i),  and clarify its meaning for $(M,\deg)$.
\end{prob}

All the following Examples 1.\ i),ii),iii),iv),v), 2.\ and 3., satisfy  positively above Conjecture 5.6 and belong to the class in Problem 5.7.  In particular, it was conjectured in [S2] that Artin monoids belongs to the class in Problem 5.7 (in a stronger form) and was partially affirmatively solved in [K-T-Y].
In next \S6, we shall give a series of examples where Conjecture 5.6 is satisfied by all examples, but they may not always belong to the class in Problem 5.7.

\begin{example}
{\bf 1.}  For a monoid $M$ (cancellative and satisfying the descending chain condition) and a subset $I_0\!\subset\! (M/\!\!\sim\setminus\! \{[1]\})$  with $I_0\!=\!\min(I_0)$, set
\[
h(M, I_0):=\max\{ \text{height of } T \in \mathrm{Tmcm}(M,I_0)\}.
\]

\noindent
i) It is clear that $M$ is a free monoid if and only if $h(M,I_0)=0$ for any $I_0$.  

\noindent
ii) An Artin monoid (or, more generally, a monoid, any of whose finite subsets 

 admits the least common multiple) has $h(M,I_0)\le 1$ (see following 2.). 

\noindent
iii)  Ishibe \cite{I2} gave an example of $N_{M,\deg}(t)$  with $h(M,\deg)\!=\!2$.  

\noindent
iv) In general, if $h(M,I_0)<\infty$, then $N_{M,I_0}(t)$ is a polynomial in $t$. 

\noindent
v) Ishibe \cite{I2} has determined explicitly $N_{M,\deg}(t)$ for monoid  of type $\mathrm{B_{ii}}$  (\cite{I1},\cite{S-I}) 

and certain Zariski-van Kampen monoids, which have $h(M,\deg)\!=\!\infty$.

\medskip
\noindent
{\bf 2.}  Let $M\!:=\!\Z_{>0}$ with the ordinary product structure. 
We remark that the unit group of this case is a trivial group so that $\Z_{>0}/\!\!\sim\!=\!\Z_{>0}$.
As for the degree map, we take the logarithm function $\deg(n)\!:=\!\!\log(n)$ for $n\!\in\!\Z_{>0}$, which is  discrete since $\lim_{n\to\infty}\log(n)\!=\!\infty$. Then, by a change $t\!=\!\exp(-s)$ of variable, the growth function is  equal to Riemann's zeta-function (in the region $\Re(s)\!>\!1$)
\vspace{-0.2cm}
\[
P_{\Z_{>0},\log}(t)\ :=\ \sum_{n=1}^\infty  t^{\log(n)}=\ \sum_{n=1}^\infty  n^{\log(t)}=\ \sum_{n=1}^\infty n^{-s} \ =\ \zeta(s),
\vspace{-0.2cm}
\]
 which is well known to extend to the whole plane $\C$ with a simple pole at $s\!=\!1$.
 
 On the other hand, the skew-growth function for $(\Z_{>0},\log)$ is determined as follows. The ground set is given by $I_0\!:=\! \min(\Z_{>0}\!\setminus\!\{1\})\!=\!\{\text{prime numbers}\}$.  Since there exists always the least common multiple for any finite subset of $I_0$,\! all non-trivial towers  have height 1.\ Thus the skew-growth function is given by
\vspace{-0.1cm}
\[
N_{\Z_{>0},\log}(t)\ :=\!\!\!\! \sum_{\substack{J: \text{finite set of}\\ \text{prime numbers}}}\!\! \!\!\!(-1)^{\#J}\prod_{p\in J} t^{\log(p)} 
\  =\!\!\!\! \prod_{p: \text{prime numbers}}(1-p^{-s}).
\vspace{-0.2cm}
\]
Thus, the inversion formula turns out to be  the Euler product  formula:

\vspace{0.2cm}
\centerline{
$\zeta(s)\ \cdot\! \underset{p: \text{prime}}{\prod}(1-p^{-s})=1$
}
\noindent
for the Riemann zeta function.
Similar product formula holds for any abelian cancellative Gaussian monoid.

\medskip
\noindent
{\bf 3.}  We give an example whose towers can have arbitrarily large heights.

\smallskip
Consider the monoid $\!:=\!\langle a,b\mid a^2\!=\!b^2,\ ab\!=\!ba\rangle_{mo}$ (for a definition of this notation, see  \S6). The condition $\deg(a)\!=\!\deg(b)\!=\!1$ uniquely determine a degree map on $M$. Put $c\!:=\!a^2\!=\!b^2$. Then, any element of $M$ is uniquely expressed in the form
\vspace{-0.1cm}
\[
a^{\varepsilon_1}b^{\varepsilon_2} c^n   \quad \text{for some } \varepsilon_1,\varepsilon_2\in \{0,1\} , \ n\in\Z_{\ge0}.
\vspace{-0.1cm}
\]
Therefore, the growth function is given by
\vspace{-0.1cm}
\[
P_{W,\deg}(t) =\frac{1}{1-t^2}+\frac{t}{1-t^2}+\frac{t}{1-t^2}+\frac{t^2}{1-t^2}=\frac{1+t}{1-t}.
\vspace{-0.1cm}
\]
On the other hand, put $ J_n=\begin{cases}
 \{ac^{[n/2]},bc^{[n/2]}\} &\text{for odd $n\in\Z_{>0}$}\\
\{c^{[n/2]},abc^{[n/2]-1}\}  &\text{for even $n\in\Z_{>0}$.}
\end{cases}$
Then, one shows easily $\mathrm{mcm}(J_n)\!=\!J_{n+1}$ ($n\in\Z_{>0}$).  This implies that 
there exists a unique tower $T_n=(I_0,J_1,\cdots,J_n)$ of height $n\in\Z_{>0}$ with the ground set $I_0=\{a,b\}$.
Therefore, the skew growth function is given by
\vspace{-0.1cm}
\[
N_{W,\deg}(t)=1+\sum_{n: \ \text{odd}\ge1}^\infty 2t^{n+1}-\sum_{n: \ \text{even}\ge1}^\infty 2t^{n-1}=\frac{1-t}{1+t}.
\vspace{-0.1cm}
\]
\end{example}
\vspace{-0.1cm}

\section{\!\!Positive homogeneous presentation of a monoid}

In this section, we discuss  presentations of monoids by infinite generators and relations, which
are natural extension of the class of positive homogeneously presented monoids
studied in [S-I].
Using the presentation, we give examples of monoids whose growth and skew growth functions are not power series but  are  formal Dirichlet series which may not necessarily be convergent. 

Including the Zariski-van Kampen monoids of type $\mathrm{B_{ii}}$,  Ishibe has shown the cancellativity for several  monoids in this class and  calculated the skew-growth function $N_{M,\deg}(t)$ explicitly (see a forthcoming paper \cite{I2}).


\begin{definition}
1.  We call a pair 
$ \langle L \mid R\rangle_{mo}$
a presentation of a monoid defined below,  if  
i)  $L$ is a set, called generators,  and
ii) $R$ is a set, called relations, consisting of  expressions
 $R_i\!=\!S_i$  with  $R_i$ and $S_i$ are positive words $\in L^*$. 
Both sets  $L$ and $R$ may not necessarily be finite.

\medskip
2. The monoid  associated with the presentation $\langle L\mid R\rangle_{mo}$ is  defined as the quotient 
of the free monoid $L^*$ generated by $L$ by the equivalence relation $\simeq$
defined by the following 1) and 2). 

1) two words $U$ and $V$ in $L^*$ are called {\it elementarily 
equivalent} if either $U=V$ or $V$ is obtained from $U$ by substituting 
a substring $R_i$ of $U$ by $S_i$ where $R_i\!=\!S_i$ is a relation of $R$ 
($S_i=R_i$ is also a relation if $R_i=S_i$ is a relation), 

2) two words $U$ and $V$ in $L^*$ are called {\it equivalent}, denoted by 
$U\simeq V$  if there exists 
a sequence $U\!=\!W_0, W_1,\cdots, W_n\!=\!V $ of words in $L^*$ for $n\!\in\!\Z_{\ge0}$
such that $W_i$ is elementarily equivalent to $W_{i-1}$ for $i=1,\cdots,n$.

3) If  $U_1\simeq V_1$ and $U_2\simeq V_2$, then $U_1U_2\simeq V_1V_2$. Thus, we define the product between the equivalence classes.

\medskip
3. A map
$\deg:  L\longrightarrow \R_{\ge0},$
which naturally extends to a map $L^*\to \R_{\ge 0}$ additively denoted again by $\deg$, is called a degree map,
if i) $\deg(R_i)=\deg(S_i)$ for any relation $R_i=S_i$ in $R$, and ii) if $\deg(a)=0$ for some $a\in L$ then there exists $a'\in L^*$ and a relation $aa'=\emptyset$.  The condition i) implies that $\deg$ induces an additive map $\langle L\mid R\rangle_{mo}\to \R_{\ge0}$, denoted again by $\deg$ called a degree map. The condition ii) requires that $\deg(u)=0$ implies $u$ is invertible in the monoid. If, further, the inverse image $\deg^{-1}(0,r)$ of an interval $(0,r)\subset \R$ for any $r\in\R_{>0}$ intersect with $L$ by a finite set, then $\deg$ gives a discrete degree map on the monoid $\langle L\mid R\rangle_{mo}$.
\end{definition}

\begin{example}
 For any sequence ${\bf p}\!=\!\{p_k\}_{k\in\Z_{\ge0}}$ with $p_k\in\Z_{\ge0}$ ($k\in\Z_{\ge0}$) and $p_1\in 2\Z_{>0}$, let us consider a pair  $(M_{\bf p},\deg)$ presented as follows:
\[
\vspace{-0.2cm}
L=\{a_k\}_{k\in\Z_{\ge0}} \quad\text{and} \quad R=\{a_k^2=a_0^{p_k}a_{k-1}\}_{k\in\Z_{\ge1}}\cup\{a_ka_l=a_la_k\}_{k,l\in\Z_{\ge0}},
\]
\vspace{-0.1cm}
\quad $\deg: L\to \R_{>0},\quad a_k\mapsto d_k:=\frac{1}{2^k}+\sum_{i=1}^k\frac{p_i}{2^{k-i+1}},$

\medskip
\noindent
The degree map is discrete if the sequence $d_0\!=\!1,d_1,d_2,\cdots$ diverges to $+\!\infty$.

\begin{ass} 
{\it 
i) Any element $u$ of the monoid $M_{\bf p}$ has a unique expression:
\vspace{-0.1cm}
\[
\vspace{-0.1cm}
\!\!\!\!\!\!\!\!*) \qquad\qquad\qquad\qquad\qquad  a_0^n \prod_{k=1}^\infty  a_k^{\varepsilon_k}
\qquad\qquad\qquad \qquad\qquad\qquad 
\vspace{-0.1cm}
\]
for suitable $n\!\in\!\Z_{\ge0}$ and $\varepsilon\in E:=\{(\varepsilon_k)_{k\in\Z_{\ge1}}\in\! \{0,1\} ^{\Z_{\ge1}} \mid   \varepsilon_k\!=\!0 \text{ for } k\!>\!>\!0\}$.  
The degree map $\deg$ gives an embedding of $M_{\bf p}$ into $\R_{\ge0}$.

ii)  There exists an additive  function $m({\bf p},\delta)\in\Z$ on ${\bf p}$ depending  $\delta\in E$ such that  a number $r\!=\!m\!+\!\sum_{k=1}^\infty \delta_k/2^k\in \R$ for $m\!\in\!\Z$ and $\delta\in E$ belongs to the image $\deg(M_{\bf p})$ if and only if  $m\ge m({\bf p},\delta)$.
}
\end{ass}
\noindent
{\bf Terminology.}  We shall call the expression $*)$ the {\it normal form} of $u$, $a_0^n$ and $ \prod_{k=1}^\infty  a_k^{\varepsilon_k}$ the {\it integral part} and {\it 2-decimal part} of $u$, respectively.  
The number $\mathrm{depth}(u):=\max\{k\in\Z_{\ge1}\mid \varepsilon_k\not=0\}$ shall be called the {\it depth} of $u\in M_{\bf p}$.

\begin{proof}
i) Using the relations $R$, it is clear that any element $u$ of $\langle L,R\rangle_{mo}$ can be expressed in the form $*)$.
Then, we have 
\vspace{-0.25cm}
\[
\vspace{-0.2cm}
\deg(u) =n+ \sum_{k=1}^{\infty}\varepsilon_k \big(\frac{1}{2^k}+\sum_{i=1}^k\frac{p_i}{2^{k-i+1}} \big). 
\vspace{-0.05cm}
\]
Note that RHS is a finite sum w.r.t.\ the index $k$, and  expressed as a 2-decimal number. In view of the fact that the leading term (the deepest part) of the 2-decimal expansion of $d_k:=\frac{1}{2^k}+\sum_{i=1}^k\frac{p_i}{2^{k-i+1}}$ is equal to $\frac{1}{2^k}$ ($k\in\Z_{\ge0}$), the  2-decimal part of the number $\deg(u)\bmod 1$ determines the part $\varepsilon\!=\!(\varepsilon_k)_{k=1}^\infty$ of the word $*)$. So the exponent $n$ of the integral part $a_0^n$ of $*)$ is given by $\deg(u)\!-\!\sum_{k=1}^{\infty}\varepsilon_k d_k $. Thus the expression $*)$ is uniquely determined from $\deg(u)$, and  this implies that the correspondence $u\mapsto \deg(u)$ is injective.
 
 ii) For the given $r\!=\!m\!+\!\sum_{k=1}^\infty \delta_k/2^k\in \R$, we want to solve the equation
\vspace{-0.2cm}
 \[
 \vspace{-0.1cm}
n+ \sum_{k=1}^{\infty}\varepsilon_k \big(\frac{1}{2^k}+\sum_{i=1}^k\frac{p_i}{2^{k-i+1}} \big)=m+\sum_{k=1}^\infty \frac{\delta_k}{2^k}
\vspace{-0.1cm}
 \]
on $(n,\varepsilon)\!\in\!\Z_{\ge0}\!\times\! E$. The comparison of 2-decimal parts of both hand sides, says that $\varepsilon$ is uniquely determined from $\delta$ and {\bf p}. Then, in view of the equality: $n\!+\!\Big[\sum_{k=1}^{\infty}\varepsilon_k \big(\frac{1}{2^k}\!+\!\sum_{i=1}^k\frac{p_i}{2^{k-i+1}} \big)\Big]\!=\!m$ (here, "$[\ \cdot\ ]$" is the Gauss symbol),  the condition: $n\ge0$ is transformed to $m\!-\!\Big[\sum_{k=1}^{\infty}\varepsilon_k \big(\frac{1}{2^k}\!+\!\sum_{i=1}^k\frac{p_i}{2^{k-i+1}} \big)\Big]\!\ge\!0$. 
Put $m({\bf p},\delta):=\Big[\sum_{k=1}^{\infty}\varepsilon_k \big(\frac{1}{2^k}\!+\!\sum_{i=1}^k\frac{p_i}{2^{k-i+1}} \big)\Big]$ and rewriting $\varepsilon$ in RHS in terms of  $\delta$, we obtain the result.
\vspace{-0.1cm}
\end{proof}
\begin{cor} {\it Using the normal form $*)$, the growth function is given by 
\vspace{-0.05cm}
\[
\vspace{-0.05cm}
P_{M_{\bf p},\deg}(t)\ =\ \sum_{n=0}^\infty  t^n\cdot \prod_{k=1}^\infty \sum_{\varepsilon_k=0}^1 t^{\deg(a_k)\varepsilon_k}
\ =\  \frac{\prod_{k=1}^\infty (1+t^{\deg(a_k)})}{1-t}.
\vspace{-0.03cm}
\]
\vspace{-0.05cm}
\!\!\!The Dirichlet series $P_{M_{\bf p},\deg}(\exp(\!-\!s))$ (up\! to\! the\! factor\! $1\!-\!\exp(-s)$) converges on}
\end{cor}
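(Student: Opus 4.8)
The plan is to obtain the closed form for $P_{M_{\bf p},\deg}(t)$ by summing directly over the normal form established in the preceding Assertion, and then to analyze the resulting infinite product by the elementary theory of Dirichlet series.

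First I would note that, by part~i) of the Assertion, $\deg$ embeds $M_{\bf p}$ into $\R_{\ge0}$; since $u\sim v$ implies $\deg(u)=\deg(v)$, injectivity of $\deg$ forces the relation $\sim$ to be trivial, so $M_{\bf p}/\!\!\sim\ =M_{\bf p}$ and $P_{M_{\bf p},\deg}(t)=\sum_{u\in M_{\bf p}}t^{\deg(u)}$. Using the bijection $u\leftrightarrow(n,\varepsilon)\in\Z_{\ge0}\times E$ of the same Assertion, under which $u=a_0^{n}\prod_{k\ge1}a_k^{\varepsilon_k}$ and, because $\deg(a_0)=d_0=1$, $\deg(u)=n+\sum_{k\ge1}\varepsilon_k\deg(a_k)$ (a finite sum for each $\varepsilon\in E$), I would split the sum over the two independent parameters:
\[
P_{M_{\bf p},\deg}(t)=\Bigl(\sum_{n\ge0}t^{n}\Bigr)\Bigl(\sum_{\varepsilon\in E}\prod_{k\ge1}t^{\varepsilon_k\deg(a_k)}\Bigr).
\]
Next I would check that both factors are legitimate elements of the ring $R_\Z$ of \S4: discreteness of the degree map gives $\deg(a_k)=d_k\to+\infty$, so for each $r$ only finitely many $d_k$ lie below $r$; hence the partial products of $\prod_{k\ge1}(1+t^{\deg(a_k)})$ stabilize modulo $t^{r}R_\Z$, the infinite product converges in $R_\Z$, and (because $E$ is exactly the set of finitely supported $\{0,1\}$-valued sequences, which index the monomials obtained by choosing one summand from each binomial factor) one has the key identity $\sum_{\varepsilon\in E}\prod_{k}t^{\varepsilon_k\deg(a_k)}=\prod_{k\ge1}(1+t^{\deg(a_k)})$. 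Together with $\sum_{n\ge0}t^{n}=(1-t)^{-1}$ in $R_\Z$, this yields the asserted identity (the middle expression of the Corollary being the intermediate step in which each factor $1+t^{\deg(a_k)}$ is displayed as $\sum_{\varepsilon_k=0}^{1}t^{\deg(a_k)\varepsilon_k}$).

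For the convergence statement I would substitute $t=\exp(-s)$, so that $(1-\exp(-s))\,P_{M_{\bf p},\deg}(\exp(-s))=\prod_{k\ge1}(1+\exp(-sd_k))$. An infinite product $\prod_k(1+c_k(s))$ of holomorphic functions converges absolutely and locally uniformly exactly where $\sum_k|c_k(s)|<\infty$, i.e.\ where $\sum_k\exp(-\Re(s)\,d_k)<\infty$; by the classical theory of Dirichlet series this is the open half-plane $\{\Re(s)>\sigma_0\}$, where $\sigma_0$ is the abscissa of absolute convergence of $\sum_k\exp(-sd_k)$, namely $\sigma_0=\limsup_{k\to\infty}(\log k)/d_k$. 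Consequently $(1-\exp(-s))\,P_{M_{\bf p},\deg}(\exp(-s))$ is holomorphic on that half-plane, and $P_{M_{\bf p},\deg}(\exp(-s))$ extends meromorphically there with poles confined to the zeros $2\pi i\,\Z$ of $1-\exp(-s)$.

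The computations of the first two paragraphs are routine once the normal form is granted; the only delicate point is the bookkeeping in $R_\Z$, namely that interchanging the infinite sum over $E$ with the infinite product is permitted in the formal Dirichlet series topology of \S4, which is exactly what discreteness of $\deg$ provides. The one genuinely quantitative step is the determination of $\sigma_0$, which reduces to estimating the growth of $d_k=\tfrac{1}{2^{k}}+\sum_{i=1}^{k}\tfrac{p_i}{2^{k-i+1}}$ in terms of ${\bf p}$ (equivalently, via the recursion $2d_k=d_{k-1}+p_k$); this is where the choice of ${\bf p}$ decides whether $\sigma_0$ is a finite positive number, equals $0$ (so the series converges on the whole right half-plane), or equals $+\infty$ (so the series converges nowhere).
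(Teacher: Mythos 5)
Your derivation of the closed formula is correct and is essentially the paper's own (implicit) argument: the Corollary is read off from the unique normal form $a_0^n\prod_{k\ge1}a_k^{\varepsilon_k}$ of the preceding Assertion, the sum over $M_{\bf p}/\!\!\sim\;=M_{\bf p}$ factors through the bijection with $\Z_{\ge0}\times E$ into the geometric series $\sum_n t^{n}$ (using $d_0=1$) times $\sum_{\varepsilon\in E}\prod_k t^{\varepsilon_k d_k}$, and your justification of the identity $\sum_{\varepsilon\in E}\prod_k t^{\varepsilon_k d_k}=\prod_{k\ge1}(1+t^{d_k})$ in $R_\Z$ via $d_k\to\infty$ is exactly the bookkeeping the paper leaves to the reader.

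The convergence statement is where you deviate from what is actually asserted, and your closing formula for the abscissa is not correct in general. Your intermediate characterization is fine: for nonnegative coefficients, convergence of the expanded Dirichlet series is equivalent to $\sum_k e^{-\Re(s)d_k}<\infty$, so the domain is the half-plane $\Re(s)>\sigma_0$ with $\sigma_0$ the abscissa of convergence of $\sum_k e^{-sd_k}$. But the Corollary expresses $\sigma_0$ by a Kojima-type counting formula, essentially $\limsup_{x\to\infty}\frac{1}{x}\log\#\{k\mid d_k\le x\}$, whereas you write $\sigma_0=\limsup_{k\to\infty}(\log k)/d_k$. The Cahen-type expression $\limsup_k(\log k)/d_k$ agrees with the counting formula only when the exponents are arranged in nondecreasing order, and here the sequence $d_k$ need not be monotone: from $2d_k=d_{k-1}+p_k$ one sees that $d_k$ halves whenever $p_k=0$, so a choice of ${\bf p}$ with huge, sparse entries followed by long blocks of zeros produces a spike-and-decay sequence with $d_k\to\infty$ (hence $\deg$ is still discrete) for which $\sum_k e^{-\sigma d_k}$ converges for every $\sigma>0$ while $\limsup_k(\log k)/d_k>0$; your half-plane is then strictly smaller than the true domain of convergence. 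The repair is simply to state $\sigma_0$ via the counting function of the $d_k$ (or to sort the exponents before applying the $\log k/d_k$ formula), which is precisely the classical result the paper cites as Kojima's; your final remark on the trichotomy of $\sigma_0$ should likewise be made in terms of the counting function rather than the recursion for $d_k$ alone.
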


\noindent
{\it the domain $\{s\!\in\!\C\mid \Re(s)\!>\!\!\overline{\underset{m\!\to\!\infty}{\lim}}\!\frac{1}{m}\!\log(\#\{n\!\in\!\Z_{\ge0} \mid [m]\!\le\! \!d_n\!\!<\!\!m\})\}$ (Kojima).}

\medskip
Applying this formula to the inversion formula $(*)$, we have the following description of the skew-growth function.
\[
N_{M_{\bf p},L}(t) =\frac{1-t}{\prod_{k=1}^\infty (1+t^{\deg(a_k)})}\quad \qquad \qquad \qquad \qquad \qquad 
\]
\[
\qquad =\sum_{\bf n} (-1)^{|{\bf n} |} \ t^{\deg(\Delta_{{\bf n} })}  + \sum_{\bf n} (-1)^{|{\bf n} |+1} \ t^{\deg(a_0\Delta_{{\bf n} } )} ,
\]
where we put 
$ \Delta_{\bf n}:=\prod_{k=1}^\infty a_k^{n_k} $
for any sequence ${\bf n}=\{n_k\}_{k=1}^{\infty}$ of non-negative integer where only finite $n_k$ are non-zero, and we put $|{\bf n}|=\sum_{k=1}^{\infty} n_k$.  

\begin{rem}
Let $r$ be the radius of absolute convergence of $\prod_{k=1}^\infty (1+t^{\deg(a_k)})$. Since $d_k=\deg(a_k)$ is a divergent sequence, one has $r\le 1$ and, hence,  the radius of absolute convergence of 
$P_{M_{\bf p},L}(t)$ is equal to $r$. On the other hand, it may be clear that $N_{M_{\bf p},L}(t)$ converges absolutely on the disc of radius $r$ so that Conjecture 5.6 is satisfied by these examples. However, whether $(M_{\bf p},\deg)$  belongs to the class in Problem 5.7. or not is an open question.
\end{rem}

We do not know how to obtain the expression $N_{M_{\bf p},\deg}(t)$  directly from the towers $\mathrm{Tmcm}(M_{\bf p},L)$.  In order to determine $\mathrm{Tmcm}(M_{\bf p})$,  we describe algorithms  i), ii) and  iii)  to obtain the  set 
$\mathrm{mcm}(J)$ for any finite subset $J$ of $M_{\bf p}$.  

\medskip
i) If the elements of $J$ have depth  bounded by $k\in\Z_{>0}$, then the depth of the elements of $\mathrm{mcm}(J)$ is also bounded by the same $k$.
\vspace{-0.1cm}
\begin{proof} 
Let  $v$ has depth $K\!>\!k$ and let $v\!=\!v'\prod_{j=k+1}^{K}a_j^{\varepsilon_j}$ be the normal form of  $v$ where $v'$ has depth at most $k$.  If $v$ is divisible by $u\in J$, then 
the decimal expansion $\deg(v/u)\!=\!\deg(v)\!-\!\deg(u)$ deeper than $k$ is unchanged from that of $v$. That is, $v/u\!=\!w\prod_{j=k+1}^{K}a_j^{\varepsilon_j}$ for some $w\!\in\! M_{\bf p}$.  That is, $v'/u\!=\!w\!\in\! M$. In other words, $v'\!=\!v/\prod_{j=k+1}^{K}a_j^{\varepsilon_j}$ is still divisible by $u\in J$, i.e.\  $v$ was not minimal.
\end{proof}

\vspace{-0.1cm}
ii) For each $\varepsilon\in E$, consider the set $M_\varepsilon:=\{a_0^n \prod_{k=1}^\infty  a_k^{\varepsilon_k}\mid n\in\Z_{\ge0}\}$. Then, there exists a unique minimal element of $M_\varepsilon\cap \mathrm{cm}(J)$, denoted by $\mathrm{mcm}_\varepsilon(J)$.
\vspace{-0.1cm}
\begin{proof}
The intersection $M_\varepsilon\cap \mathrm{cm}(J)$ is non-empty since $a_0^N$  is always divisible by a given finite set $J$ for sufficiently large $N$. Since $M_\varepsilon$ is inductively ordered by $\Z_\ge0$, the intersection  always has the minimal element. 
\end{proof}

\vspace{-0.1cm}
iii) In view of i) and ii),  $\mathrm{mcm}(J)\subset \{\mathrm{mcm}_\varepsilon(J)\mid \text{depth$(\varepsilon)\le$ depth$(J)$} \}$.
Since RHS is a finite set, we can choose its minimal subset $\mathrm{mcm}(J)$ by finite steps.

\medskip
Using the above  i)-iii), for any finite subset $J\!\subset\! L$ of the generator set $L$ and a positive integer $n\!\in\!\Z_{\ge0}$, we are algorithmically able to determine all towers having $J$ as its first stage  and height less or equal than $n$, and to check that minimal common multiples which appear in these towers have expressions of the form either $a_0\Delta_{\bf n}$ or $\Delta_{\bf n}$ for some $\bf n$  according as $a_0\!\in\! J$ or not (which are independent of the structure constant $\bf p$).  
However, an explicit list of all towers on the ground $L$ (i.e.\  the explicit description of $\mathrm{Tmcm}(M_{\bf p},L)$) is still unknown.


\end{example}

\begin{rem}
In [S-I], we have studied a particular subclass of presentation of monoids, where the set $L$ is finite and the set $R$ consists of relations of the form $R_i=S_i$ where $R_i$ and $S_i$ are words in $L$ of the same length. In this case, the map $\deg:L\to \{1\}$ automatically defines a discrete degree map on the monoid.  
Actually, Artin monoids [B-S] and many Zariski-van Kampen monoids (see e.g. [S-I], [I1],[I2]) belong to this class of monoids.  In a forthcoming paper [I2], skew growth functions for some monoids in this class, in particular for  the monoid of type $B_{ii}$ [I1], and also some other examples shall be determined explicitly.
\end{rem}
\vspace{-0.3cm}

\end{document}